\UseRawInputEncoding
\documentclass[12pt,reqno]{amsart}
\usepackage[T1]{fontenc}
\usepackage{lmodern}
\usepackage{amssymb,mathtools,booktabs}
\usepackage[margin=1.05in]{geometry}
\usepackage[colorlinks=true,linkcolor=blue,citecolor=blue,urlcolor=blue]{hyperref}
\hypersetup{pdftitle={Sharp Neumann eigenvalue means on triangles: an exact certificate proof and stability},pdfauthor={Guowei Dai, Yingxin Sun},pdfsubject={Exact computer-assisted proof and stability estimates},pdfkeywords={Neumann eigenvalues, triangles, Bernstein certificates}}
\allowdisplaybreaks[2]

\makeatletter
\renewcommand{\uppercasenonmath}[1]{}
\let\original@setauthors\@setauthors
\renewcommand{\@setauthors}{%
  \begingroup
  \let\MakeUppercase\@firstofone
  \original@setauthors
  \endgroup}
\renewcommand{\section}{\@startsection{section}{1}
  \z@{.7\linespacing\@plus\linespacing}{.5\linespacing}
  {\normalfont\bfseries\centering}}
\makeatother

\numberwithin{equation}{section}
\newtheorem{theorem}{Theorem}[section]
\newtheorem{proposition}[theorem]{Proposition}
\newtheorem{lemma}[theorem]{Lemma}
\newtheorem{corollary}[theorem]{Corollary}
\theoremstyle{remark}

\newcommand{\Q}{\mathbb Q}
\newcommand{\E}{\mathbb E}
\newcommand{\dd}{\,\mathrm d}
\DeclareMathOperator{\tr}{tr}
\DeclareMathOperator{\adj}{adj}
\DeclareMathOperator{\spanop}{span}

\title[Sharp Neumann eigenvalue means on triangles]{Sharp Neumann eigenvalue means on triangles:\protect\\an exact certificate proof and stability}
\author{Guowei Dai}
\address[Guowei Dai]{School of Mathematical Sciences,
Dalian University of Technology, Dalian 116024, P.R. China.}
\email{daiguowei@dlut.edu.cn}
\thanks{Corresponding author: Guowei Dai.}

\author{Yingxin Sun}
\address[Yingxin Sun]{School of Mathematical Sciences,
Dalian University of Technology, Dalian 116024, P.R. China.}
\email{sunyingxin2023@mail.dlut.edu.cn}

\thanks{Research supported by NNSF of China (No. 12371110).}
\date{}
\makeatletter
\@ifundefined{subjclassname@2020}{\@namedef{subjclassname@2020}{\textup{2020} Mathematics Subject Classification}}{}
\makeatother
\subjclass[2020]{35P15; 49R05; 65G20}
\keywords{Neumann eigenvalue; Triangle; Isoperimetric inequality; Computer-assisted proof; Bernstein polynomial}

\begin{document}
\begin{abstract}
We prove both inequalities in Laugesen and Siudeja's Conjecture~6.34
for the first two positive Neumann eigenvalues of triangles.
The equilateral triangle uniquely maximizes their harmonic mean at
fixed perimeter and their geometric mean at fixed area.
Two-dimensional trial spaces reduce the proof to polynomial positivity.
Projected cubic corrections repair the second-order error near equality;
elementary estimates and rational Bernstein certificates cover the
remaining shapes. We provide exact data and a verifier, and derive
an explicit quadratic stability estimate in terms of side-length asymmetry.
\end{abstract}
\maketitle
\markboth{Sharp Neumann eigenvalue means on triangles}{Sharp Neumann eigenvalue means on triangles}

\section{Introduction and main results}\label{sec:introduction}

For a bounded connected planar Lipschitz domain $\Omega$, the positive Neumann
eigenvalues describe the decay rates of nonconstant modes of heat flow
with an insulating boundary. We use the indexing
\[
 0=\mu_1\left(\Omega\right)<\mu_2\left(\Omega\right)\leq\mu_3\left(\Omega\right)\leq\cdots,
\]
with multiplicities included. Thus $\mu_1$ corresponds to the constant
functions. The eigenvalue equation is $-\Delta u=\mu u$ in $\Omega$,
with $\partial_n u=0$ on the boundary, where $n$ denotes the outward unit
normal. Shape optimization asks how geometric
constraints bound the low spectrum. Because a dilation by a factor
$t>0$ multiplies the eigenvalues by $t^{-2}$, area and squared perimeter
provide natural scale-invariant normalizations in two dimensions.

The classical theorem of Szeg\H{o} and Weinberger identifies the disk
as a maximizer of the first positive Neumann eigenvalue at fixed area
\cite{Szego1954,Weinberger1956}. A restriction to triangles changes the
admissible optimizer and introduces a second geometric normalization:
Laugesen and Siudeja proved that the equilateral triangle maximizes
$\mu_2\left(T\right)A\left(T\right)$ and $\mu_2\left(T\right)L^2\left(T\right)$ among triangles \cite{LS2009}.
Here $A\left(T\right)$ and $L\left(T\right)$ denote the area and perimeter of triangle $T$.
The next question concerns the two lowest positive eigenvalues together.

For positive numbers $x,y$, write
\[
 H\left(x,y\right)=\frac{2xy}{x+y},\,\,\, G\left(x,y\right)=\sqrt{xy}
\]
for their harmonic and geometric means. Laugesen and Siudeja proposed
the bound for the harmonic mean at fixed perimeter in Conjecture~9.5
and the bound for the geometric mean at fixed area in Conjecture~9.6
of \cite{LS2009}; together they appear as Conjecture~6.34 in
\cite{LS2017}. Their earlier paper indexes the
zero eigenvalue by $0$, whereas we index it by $1$.
They also proved
\cite[Theorem~3.4]{LS2009}
\[
 H\left(\mu_2\left(T\right),\mu_3\left(T\right)\right)A\left(T\right)\leq\frac{4\pi^2}{3\sqrt3}.
\]
The first conjecture replaces $A$ by the larger factor
$L^2/\left(12\sqrt3\right)$, since $L^2\geq12\sqrt3 A$ for triangles
\cite[Lemma~3.2]{LS2009}. The second replaces $H$ by the larger mean $G$.
We prove both statements.

\begin{theorem}\label{thm:main}
Let $T$ be a nondegenerate planar triangle, with perimeter $L$ and area
$A$. Its first two positive Neumann eigenvalues satisfy
\begin{align*}
 H\left(\mu_2\left(T\right),\mu_3\left(T\right)\right)L^2&\leq16\pi^2,\\
 G\left(\mu_2\left(T\right),\mu_3\left(T\right)\right)A&\leq\frac{4\pi^2}{3\sqrt3}.
\end{align*}
Equality in either inequality holds if and only if $T$ is equilateral.
\end{theorem}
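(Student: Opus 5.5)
The plan is to reduce both inequalities to upper bounds coming from a two-dimensional Rayleigh--Ritz space. By the min--max principle, if $V$ is any two-dimensional space of $H^1(T)$ functions orthogonal to constants, then the eigenvalues $\lambda_1\le\lambda_2$ of the generalized eigenproblem for the Dirichlet form against the $L^2$ form on $V$ satisfy $\mu_2\le\lambda_1$ and $\mu_3\le\lambda_2$. Since $H$ and $G$ are increasing in each argument, it suffices to bound $H(\lambda_1,\lambda_2)L^2$ and $G(\lambda_1,\lambda_2)A$. These are explicit algebraic quantities:
\[
H(\lambda_1,\lambda_2)=\frac{2\det K}{\det M\,\tr(M^{-1}K)},\qquad G(\lambda_1,\lambda_2)^2=\frac{\det K}{\det M},
\]
where $K,M$ are the $2\times2$ stiffness and mass matrices. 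So no eigenvector computation is needed, only traces and determinants. I would parametrize $T$ up to similarity by a vertex $(a,b)$ of a triangle with base $[0,1]$, or by its side lengths. The trial space I would take first is the pullback, under the affine map $T_{\mathrm{eq}}\to T$, of the doubly degenerate second eigenspace of the equilateral triangle, i.e.\ the transplanted modes as in \cite{LS2009}. For these the entries of $K$ and $M$ are rational, or at worst explicit in $\pi$, functions of the shape parameters.

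The target inequalities then take the form $P(a,b)\ge0$. Here $P$ is a polynomial, after clearing denominators and squaring away the square roots coming from $L$. Equality holds at the equilateral point. The affinely transplanted space is exact at the equilateral triangle, but it only gives a bound with equality to first order. The Hessian of the defect at the equilateral point is therefore not guaranteed to be positive. I expect it to degenerate, or even to have the wrong sign, for the $L^2$ inequality, since Theorem~3.4 of \cite{LS2009} is saturated only in the weaker area form. So I would enrich the trial functions by a small correction $u_i+\varepsilon\,w_i$. Here $w_i$ is a cubic polynomial correction, projected orthogonally to constants and to the span of the $u_i$, and chosen linearly in the shape perturbation so that the second-order term of the Rayleigh quotients improves. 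Concretely, I would choose $w_i$ to approximate the first-order shape derivative of the true eigenfunctions. This should make the defect locally a positive definite quadratic form in the asymmetry $(a-\tfrac12,\,b-\tfrac{\sqrt3}{2})$, which also yields the quadratic stability estimate.

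Away from equality I would split the shape space. For very degenerate triangles, where $b\to0$ or the vertex runs far off, elementary bounds suffice. One is the known bound $\mu_2 A\le$ const from \cite{LS2009}. Another is the fact that $\mu_3$ stays bounded while $\mu_2\to0$ for thin triangles, which forces $H$ and $G$ to be small, via simple explicit trial functions of one variable along the long direction. On the remaining compact region I would certify $P\ge0$ by subdividing into boxes or triangles, expanding $P$ in the Bernstein basis on each piece, and checking that all coefficients are nonnegative in exact rational arithmetic. Near the equilateral point, where $P$ vanishes to second order, Bernstein positivity fails. There I would instead certify that $P/\lVert\cdot\rVert^2$, or the Hessian plus a remainder bound, is positive, for example by factoring out the quadratic form and certifying the quotient.

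The main obstacle is the neighbourhood of the equilateral triangle. Both the choice of corrections and the certificate must control a defect that vanishes quadratically, and the uncorrected trial space probably loses at second order. Once the local quadratic form is shown positive definite with an explicit radius of validity, the uniqueness of equality follows: $P>0$ away from the equilateral point, and the quadratic lower bound holds near it.
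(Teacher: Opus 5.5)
Your architecture matches the paper's in most respects. Both use the trace/determinant form of the Ritz bound, one-variable trial functions near degeneracy, and the transplanted equilateral eigenspace enriched by cubic corrections projected off constants and $U$. Both also certify a quotient after removing the quadratic zero; the paper substitutes $h=-\rho$, $k=\rho\tau$, which turns that zero into an exact factor $\rho^2$.

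\textbf{Main gap: the middle region.} As written, the $P$ you would certify there by subdivision comes from the single equilateral-based family, and that family is only accurate near $r=2$. For a transplanted pair the energy matrix is $N/(4d)$, where $N=(a^2+d)X-a(Z+Z^t)+Y$ contains the vertical energy $Y$ with coefficient one. So the Ritz values grow like $1/d$ as the triangle flattens. Already at the isosceles shape $r=11/10$, the two transplanted modes have Rayleigh quotients of about $11.1$ and $56$. This gives $HL^2\approx327$, roughly twice $16\pi^2$, whereas the single function $X$ already gives Rayleigh quotient $6$ there. Corrections linear in $(h,k)$, tuned to second order at $(0,0)$, give no reason to expect repair when $\rho$ is of order one. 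Near $r=1$ they cannot repair it: the corrected functions are not annihilated by $\partial_z-a\partial_x$, because their trigonometric parts have frequencies incompatible with any single $a$. Hence $N$ stays positive definite as $d\to0$, and both Ritz values blow up. Refining the Bernstein subdivision cannot certify a polynomial that is actually negative. What is missing is a trial space adapted to intermediate shapes. The paper uses forty closed rectangles in $(r,\tau)$ covering $11/10\le r\le19/10$. Each has its own fixed rational pair of polynomials of degree at most five, suggested by a numerical Ritz solve at the cell centre and transplanted from a reference triangle. Each pair is Bernstein-certified on its whole cell, and an exact covering check completes the argument.

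\textbf{Second gap: the degenerate region.} Your reason why this region is easy is wrong for the harmonic-mean inequality. Thin triangles do not have $\mu_2$ small relative to $\mu_3$. Under perimeter normalization, $\mu_2L^2$ and $\mu_3L^2$ converge to positive eigenvalues of a one-dimensional weighted Sturm--Liouville problem; in the isosceles limit these are $16j_{0,1}^2$ and $16j_{1,1}^2$. So $HL^2$ tends to about $0.84\cdot16\pi^2$, and only $GA$ tends to zero. The bound $\mu_2A\le\mathrm{const}$ gives no control of $\mu_3$ and does not help. The harmonic inequality near degeneracy therefore needs a genuine quantitative estimate. The paper's pair $X-a/3$, $(X-a/3)^2-v$ gives $H(\nu_1,\nu_2)=80/(3(3+a^2))$, hence $HL^2\le784/5<16\pi^2$, but only for $r\le11/10$. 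The threshold is about $1.1074$, with a margin below one percent at the cutoff. This is why the middle region must reach that far down, so it cannot be covered by a family that is only good near equality.

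A minor point: without corrections, the local space fails at second order for the area--geometric target as well (coefficient $729-4\lambda^2<0$), not only for the perimeter form. Your corrections must repair both at once, as the paper's do.
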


The 2009 Oberwolfach report
\cite[pp.~405--406]{OW2009} states the geometric-mean inequality for
triangles and attributes it to unpublished Mathematica-assisted work of
Siudeja. The subsequent
chapter \cite{LS2017} nevertheless records the two bounds together as
a conjecture. Enache and Philippin proved a bound for the product
of the first two positive Neumann eigenvalues of any planar polygon
\cite[Theorem~1.2]{EP2013}. For triangles, their bound is
$A^2\mu_2\mu_3<108$ \cite[equation~(1.13)]{EP2013}, with their
indexing shifted by one. Theorem~\ref{thm:main} gives the sharp constant
$16\pi^4/27\approx57.7$ and identifies equality. Their proof uses linear trial
functions and moments of inertia. We give explicit trial functions
and rational certificates for both conjectured bounds, together with
all data and a verifier.

The proof must address two different problems. First, fixing the longest
side does not keep the triangle away from degeneration: its height can
still tend to zero. Estimates proved only on a compact family of
nondegenerate shapes therefore leave part of the conjecture unresolved.
Second, sharpness creates a different problem near the equilateral
triangle. Its first positive eigenvalue has multiplicity two, so both
modes must be treated together. Affinely transporting their eigenspace
to nearby triangles preserves equality at the equilateral shape, but
the resulting bounds exceed the desired constants at second order.
Subsection~\ref{sec:equilateral} computes this error explicitly.

Our starting point is to replace the unknown eigenfunctions by two
explicit mean-zero trial functions. The min--max principle bounds both
eigenvalues by the two Ritz values of their span. Their harmonic mean
and product can then be computed from the mass and energy matrices,
without finding the individual Ritz values. The task is thus to choose
trial functions for which two explicit scalar inequalities hold.
Near degeneration, horizontal polynomials give bounds independent of
the small height. Near the equilateral triangle, we start with its exact
eigenfunctions and add corrections proportional to the change of shape.
Projecting the corrections off the equilateral eigenspace preserves
equality and linear independence while allowing the trial space to
adjust to the deformation. The resulting deficits have a positive
quadratic term, which also leads to stability.

Between these two regions, we select forty fixed pairs of polynomials
of degree at most five, each on a parameter rectangle. For each pair,
the desired bounds reduce to polynomial positivity. A Bernstein
expansion proves positivity on the entire rectangle by checking
finitely many rational coefficients. Near equality, the same test is
applied after factoring out the quadratic zero. Related methods occur
in Siudeja's work on Dirichlet eigenvalues
\cite[Sections~5--7]{Siudeja2010}, which uses different trial functions
on parameter rectangles and symbolic checks of polynomial inequalities.
For the Dirichlet fundamental ratio, Arbon \cite{Arbon2022} combines
local perturbation arguments with validated finite-element bounds and
continuity estimates. Here the main new ingredient is a corrected
trial space that makes both Neumann mean inequalities sharp near
equality. Together with the estimates on the other two regions and
their exact certificates, it proves both bounds and gives an explicit
stability estimate.

For a triangle
with side lengths $\ell_1,\ell_2,\ell_3$ and perimeter $L$, define its
normalized squared side-length asymmetry by
\begin{equation*}
 \mathcal A\left(T\right)=
 \frac{\left(\ell_1-\ell_2\right)^2+\left(\ell_2-\ell_3\right)^2+\left(\ell_3-\ell_1\right)^2}{L^2}.
\end{equation*}
This quantity is zero exactly for equilateral triangles.
\begin{corollary}\label{cor:stability}
There exist absolute constants $c_H,c_G>0$ such that every nondegenerate
triangle satisfies
\begin{align*}
 1-\frac{H\left(\mu_2,\mu_3\right)L^2}{16\pi^2}
   &\geq c_H\mathcal A\left(T\right),\\
 1-\frac{27\mu_2\mu_3A^2}{16\pi^4}
   &\geq c_G\mathcal A\left(T\right).
\end{align*}
\end{corollary}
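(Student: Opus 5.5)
The corollary follows from the proof of Theorem~\ref{thm:main} once each region of shape space carries a deficit bound proportional to $\mathcal A(T)$. I would parametrize triangles up to similarity by the third vertex $(x,y)$ over a fixed base, restricted to the fundamental domain where the base is the longest side. Both deficits and $\mathcal A$ are similarity invariant, so it suffices to bound
\[
D_H(T)=1-\frac{H(\mu_2,\mu_3)L^2}{16\pi^2},\qquad D_G(T)=1-\frac{27\mu_2\mu_3A^2}{16\pi^4}
\]
from below on this domain. In each region I would replace $\mu_2,\mu_3$ by the Ritz values $\rho_2\le\rho_3$ of the chosen trial space. Min--max gives $\mu_2\le\rho_2$ and $\mu_3\le\rho_3$, and $H$ and $G$ are increasing in each argument. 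Hence $D_H\ge 1-H(\rho_2,\rho_3)L^2/(16\pi^2)$, and similarly for $D_G$. These Ritz-value deficits are the explicit rational or algebraic expressions that the proof of the theorem already shows to be positive.

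\textbf{Away from equality.} Take the degenerating region and the forty Bernstein rectangles that avoid the equilateral point. There the certificates give strict positivity of the Ritz deficits, and I would first try to extract a uniform positive lower bound $\delta>0$. On each rectangle, every Bernstein coefficient of the positivity polynomial being positive bounds the polynomial below by the minimal coefficient. The normalizing denominators (mass determinants, powers of $L$) are bounded above on compact rectangles, so this yields a positive bound for the deficit itself. In the degenerate region, the height-independent bounds from horizontal polynomials should give the deficit a limit strictly below the threshold, for example the bound $\mu L^2$ tending to something like $\pi^2\cdot 4<16\pi^2$. That gives a positive lower bound there as well. Since $\mathcal A(T)\le 2$ always (each $|\ell_i-\ell_j|\le L$), one can take $c=\delta/2$ on this part.

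\textbf{Near equality.} This is the main step and the one I expect to be hardest. The corrected trial space was built so that the Ritz deficit vanishes to second order with a positive quadratic term. The certificates there establish positivity of the polynomial obtained by factoring out the quadratic zero, that is, deficit $\ge Q(x-x_0,y-y_0)\cdot P(x,y)$ with $Q$ a positive definite quadratic form (likely $(x-\tfrac12)^2+(y-\tfrac{\sqrt3}{2})^2$ or similar) and $P>0$ on the certified rectangle. The Bernstein bound gives $\min P\ge p_0>0$. It remains to compare $Q$ with $\mathcal A$. Both vanish exactly at the equilateral point. The map from $(x,y)$ to the side lengths is smooth with nonsingular differential there: perturbing the apex moves $\ell_2-\ell_3$ and $\ell_2+\ell_3-2$ independently. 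Hence $\mathcal A\le C\,Q$ on the near region, with an explicit $C$ obtained by bounding the Hessian, or exactly as a polynomial inequality certified by the same Bernstein method. This gives $D\ge (p_0/C)\mathcal A$.

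Taking the minimum of the constants over the three regions yields $c_H$ and $c_G$, and the argument applies verbatim to both inequalities. The care needed is in the near-equality region, where factoring out the quadratic zero must yield a remainder $P$ whose positivity extends over the whole rectangle, including its center. This is exactly what the projected cubic corrections are designed to secure.
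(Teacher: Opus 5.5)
Your plan is correct and follows the paper's proof. It uses the same three regions, replaces $\mu_2,\mu_3$ by Ritz values via the monotonicity of $H$ and $G$, obtains a uniform positive deficit away from equality (certified numerator over a bounded denominator, combined with $\mathcal A\le 1/2$), and factors out the quadratic zero near equality.

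Two details differ from your guess. First, the paper's factor is $\rho^2=(2-r)^2$, which is degenerate in the shear direction. The comparison is therefore the explicit bound $\mathcal A=(2\rho^2+6s^2)/\bigl(4(r+1)^2\bigr)\le 2\rho^2$, which uses the constraint $s\le\rho$. A nonsingular differential of the side-length map would only give the reverse inequality $\mathcal A\geq c\left((x-x_0)^2+(y-y_0)^2\right)$. Second, the near-equality certificate bounds the cleared numerators, so you must also bound the denominators from above. The paper does this explicitly: $9\lambda d\mathcal T_N<77760$ and $3\lambda^2 d\det M<2916$.
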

The second left-hand side is the deficit for the square of the
normalized geometric mean. One may take $c_H=10^{-6}$ and $c_G=10^{-5}$.

Section~\ref{sec:reduction} introduces the shape parameters and proves
the variational reduction. Section~\ref{sec:proof} gives the complete
proof across the three shape regions and then proves stability.
Section~\ref{sec:reproduce} describes the supplementary certificate
and its verification. The short appendices give the integration
formulas, rational constant bounds and forty-cell partition.

\section{Shape coordinates and the variational reduction}\label{sec:reduction}

We first choose coordinates in which the geometric factors and the
trial energies can be computed explicitly. By a similarity and, if
necessary, a reflection, place a longest side between $\left(-1,0\right)$
and $\left(1,0\right)$, and the third vertex at $\left(a,b\right)$, where
$a\geq0$ and $b>0$. Let $\ell_+$ and $\ell_-$ be the distances from
$\left(a,b\right)$ to $\left(-1,0\right)$ and $\left(1,0\right)$,
respectively. Define
\[
 r=\frac{\ell_++\ell_-}{2},\qquad
 s=\frac{\ell_+-\ell_-}{2}.
\]
Then the three side lengths are $2,r+s,r-s$, and $s\geq0$ because
$a\geq0$. These variables avoid square roots in the side lengths and
make the admissible shapes a simple triangular region, as the next
lemma shows.
\begin{lemma}\label{lem:coordinates}
All normalized nondegenerate triangles are represented by
\begin{equation}\label{eq:shape}
 1<r\leq2,\,\,\, 0\leq s\leq2-r.
\end{equation}
In these coordinates,
\begin{align*}
a&=rs,& d:=b^2&=\left(r^2-1\right)\left(1-s^2\right),\\
a^2+b^2&=r^2+s^2-1,& L&=2\left(r+1\right),\,\,\, A=b.
\end{align*}
The parameter value $\left(r,s\right)=\left(2,0\right)$ represents the equilateral triangle.
\end{lemma}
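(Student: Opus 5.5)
The lemma is elementary analytic geometry, and I would prove it by direct computation from the normalization. First I derive the formulas. With vertices $(-1,0)$, $(1,0)$, $(a,b)$, we have
\[
\ell_+^2=(a+1)^2+b^2,\qquad \ell_-^2=(a-1)^2+b^2 .
\]
Subtracting gives $\ell_+^2-\ell_-^2=4a$, and since $\ell_+^2-\ell_-^2=(\ell_++\ell_-)(\ell_+-\ell_-)=4rs$, this yields $a=rs$. Adding gives $\ell_+^2+\ell_-^2=2(a^2+b^2+1)$, and the left side equals $(r+s)^2+(r-s)^2=2(r^2+s^2)$. Hence
\[
a^2+b^2=r^2+s^2-1,
\]
so that
\[
b^2=r^2+s^2-1-r^2s^2=(r^2-1)(1-s^2).
\]
The perimeter is $2+(r+s)+(r-s)=2(r+1)$, and the area is $\tfrac12\cdot 2\cdot b=b$.

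Next I show that every normalized triangle lands in the region \eqref{eq:shape}. Nondegeneracy with $b>0$ gives strict triangle inequalities:
\begin{itemize}
\item $\ell_++\ell_->2$ gives $r>1$.
\item The side of length $2$ is longest, so $r+s\le 2$, i.e.\ $s\le 2-r$.
\item Since $s\ge0$ and $s\le 2-r$, this forces $r\le 2$.
\item $s\ge0$ holds because $a\ge0$, which gives $\ell_+\ge\ell_-$.
\end{itemize}

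Conversely, given $(r,s)$ in \eqref{eq:shape}, I construct the triangle. Since $r\le 2$ we get $s\le 2-r<1$, so $1-s^2>0$; together with $r>1$ this gives $d>0$. Set $a=rs\ge0$ and $b=\sqrt d>0$. The point $(a,b)$ then has distances $\ell_\pm$ whose squares are $(a\pm1)^2+b^2=r^2+s^2\pm2rs=(r\pm s)^2$. Because $r\pm s>0$ (as $r>1>s$), this gives $\ell_\pm=r\pm s$. The longest-side condition $r+s\le 2$ holds by assumption. So each parameter point represents a normalized triangle, and the correspondence is bijective: $a$ and $b$ determine $(r,s)$ through the distances.

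Finally, the equilateral triangle has $\ell_+=\ell_-=2$, so $r=2$ and $s=0$. Conversely, $(2,0)$ gives sides $2,2,2$. The only point requiring any care is checking $d>0$ and $r-s>0$ in the converse direction, and both follow from $s<1<r$.
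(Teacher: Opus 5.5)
Your proof is correct and follows essentially the same route as the paper: subtract the squared distances to get $a=rs$, read off $1<r\leq2$ and $0\leq s\leq2-r$ from the strict triangle inequality, the longest-side normalization and $a\geq0$, and verify the converse via $(rs\pm1)^2+(r^2-1)(1-s^2)=(r\pm s)^2$ with $r-s>0$. The only cosmetic difference is that you add the two squared distances to get $a^2+b^2$ and then subtract $a^2$, whereas the paper substitutes $a=rs$ into $\ell_+^2$ to get $b^2$ and then adds $a^2$.
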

\begin{proof}
The two squared distances give
\[
 \ell_+=\sqrt{\left(a+1\right)^2+b^2},\,\,\,
 \ell_-=\sqrt{\left(a-1\right)^2+b^2}.
\]
The strict triangle inequality $\ell_++\ell_->2$ gives $r>1$.
Since the base is a longest side, $\ell_+=r+s\leq2$; hence
$r\leq2$ and $0\leq s\leq2-r$.
Subtracting the two squared-length equations gives
\[
 4a=\left(r+s\right)^2-\left(r-s\right)^2=4rs,
\]
which shows that $a=rs$.
Substituting $a=rs$ back into the equation for $\ell_+$ gives
\begin{align*}
 b^2&=\left(r+s\right)^2-\left(rs+1\right)^2\\
 &=r^2+s^2-r^2s^2-1\\
 &=\left(r^2-1\right)\left(1-s^2\right).
\end{align*}
Adding $a^2=r^2s^2$ yields $a^2+b^2=r^2+s^2-1$.

For the converse, suppose \eqref{eq:shape} holds and define
$a=rs$ and $b=\sqrt{\left(r^2-1\right)\left(1-s^2\right)}$.
Because $r>1$ and $s\leq2-r<1$, both factors under the square root
are positive. Thus $b>0$ and the three vertices form a nondegenerate
triangle. The two squared distances are
\[
 \left(rs\pm1\right)^2+
 \left(r^2-1\right)\left(1-s^2\right)
 =\left(r\pm s\right)^2.
\]
Here $r-s\geq2r-2>0$, so taking square roots gives exactly the
lengths $r+s$ and $r-s$. Both are at most $2$, as required.
The perimeter is $2+\left(r+s\right)+\left(r-s\right)=2\left(r+1\right)$,
and the area is $b$.
Finally, $r=2$ forces $s=0$, which gives $a=0$ and $b=\sqrt3$,
the equilateral triangle with side length $2$.
\end{proof}

Lemma~\ref{lem:coordinates} gives polynomial formulas for $b^2$ and
the perimeter in terms of $r,s$. It also identifies the two limiting
parts of shape space: $r\downarrow1$ gives degeneration, whereas
$r=2$ gives the equilateral triangle. We now turn from this geometric
description to the eigenvalue estimates.

Write $\left(X,Y\right)$ for Cartesian coordinates on the physical
triangle $T$. The space $H^1\left(T\right)$ consists of square-integrable
functions whose first weak derivatives are square-integrable. For a
nonzero $f\in H^1\left(T\right)$ with $\int_T f\dd X\dd Y=0$, define
\[
 \mathcal R_T\left(f\right)=\frac{\int_T\left|\nabla f\right|^2\dd X\dd Y}
                       {\int_T f^2\dd X\dd Y}.
\]
The zero-mean condition removes the constant Neumann mode. The trial
functions need not satisfy a boundary condition. We use two independent
functions because both $\mu_2$ and $\mu_3$ must be controlled.
For a square matrix, $\tr$, $\det$ and $\adj$ denote its trace,
determinant and classical adjugate, respectively. A symmetric matrix
$M$ is positive definite if $c^tMc>0$ for every nonzero real column
vector $c$, where $c^t$ denotes transpose.

The following Ritz principle explains why only two mass and energy
matrices are needed; see also \cite[equations~(2.7)--(2.10)]{EP2013}.
Its last two identities will be used in each of the three shape regions.

\begin{lemma}\label{lem:ritz}
Let $f_1,f_2\in H^1\left(T\right)$ be linearly independent and have integral zero. Set
\[
 M_{ij}=\int_T f_i f_j\dd X\dd Y,\,\,\, K_{ij}=\int_T\nabla f_i\cdot\nabla f_j\dd X\dd Y.
\]
The generalized eigenvalues $0<\nu_1\leq\nu_2$, defined by $Kc=\nu Mc$ for a nonzero vector $c$, satisfy
\[
 \mu_2\left(T\right)\leq\nu_1,\quad \mu_3\left(T\right)\leq\nu_2,
\]
and
\begin{equation}\label{eq:ritz}
 H\left(\nu_1,\nu_2\right)=\frac{2}{\tr\left(K^{-1}M\right)},\,\,\,
  G^2\left(\nu_1,\nu_2\right)=\nu_1\nu_2=\frac{\det K}{\det M}.
\end{equation}
\end{lemma}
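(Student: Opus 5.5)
The plan is to set up the two-dimensional Rayleigh--Ritz problem on $V=\spanop\{f_1,f_2\}$ and then read off the two identities from the characteristic polynomial of the pencil $(K,M)$.

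\emph{Positivity of the matrices and of the $\nu_i$.} For a nonzero $c\in\mathbb R^2$ put $f=c_1f_1+c_2f_2$. By linear independence $f\neq0$, so $c^tMc=\int_T f^2>0$, and $M$ is positive definite. Also $c^tKc=\int_T|\nabla f|^2\geq0$. If this vanished, then $\nabla f=0$ on the connected domain $T$, so $f$ would be constant. Since $f$ has integral zero, this forces $f=0$, a contradiction. Hence $K$ is positive definite too. It follows that the pencil has two real generalized eigenvalues; they are the eigenvalues of the symmetric matrix $M^{-1/2}KM^{-1/2}$, and both are positive.

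\emph{The eigenvalue bounds.} I will use the Courant--Fischer min--max characterization of the Neumann eigenvalues, $\mu_k(T)=\min_{\dim W=k}\max_{0\neq g\in W}\int|\nabla g|^2/\int g^2$ over subspaces $W\subset H^1(T)$.

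For $k=3$, take $W=\spanop\{1,f_1,f_2\}$, which is three-dimensional because the $f_i$ have mean zero and are independent. For $g=\alpha+f$ with $f\in V$, orthogonality gives $\int g^2=\alpha^2|T|+\int f^2$ and $\nabla g=\nabla f$. So the quotient is at most $\mathcal R_T(f)\leq\nu_2$ when $f\neq0$, and it is $0$ when $f=0$. Hence $\mu_3\leq\nu_2$, since $\max_{c\neq0}c^tKc/c^tMc=\nu_2$.

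For $k=2$, take $W=\spanop\{1,f^*\}$, where $f^*$ corresponds to the eigenvector $c$ with $Kc=\nu_1Mc$. The same computation gives $\mu_2\leq\nu_1$. Alternatively, one can use $\mu_2=\min\mathcal R_T$ over mean-zero functions.

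\emph{The identities.} The $\nu_i$ are the roots of $\det(K-\nu M)=0$. Equivalently, they are the eigenvalues of $M^{-1}K$, so $\nu_1\nu_2=\det(M^{-1}K)=\det K/\det M$; this is $G^2$. Likewise $1/\nu_1,1/\nu_2$ are the eigenvalues of $K^{-1}M$, so $\tr(K^{-1}M)=1/\nu_1+1/\nu_2=(\nu_1+\nu_2)/(\nu_1\nu_2)$. Hence $2/\tr(K^{-1}M)=2\nu_1\nu_2/(\nu_1+\nu_2)=H(\nu_1,\nu_2)$. If useful, one may also write $\tr(K^{-1}M)=\tr(\adj(K)M)/\det K$ for later polynomial manipulations.

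Nothing here is a serious obstacle. The only point needing care is showing that $K$ is invertible, and this is where the mean-zero hypothesis and the connectedness of $T$ are used.
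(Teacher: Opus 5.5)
Your proposal is correct and follows essentially the same route as the paper: positive definiteness of $M$ and $K$ from independence, zero mean and connectedness, a Rayleigh--Ritz comparison via min--max, and the trace and determinant identities read off from the eigenvalues $1/\nu_1,1/\nu_2$ of $K^{-1}M$. The only difference is cosmetic. You apply Courant--Fischer on all of $H^1(T)$ with the constants adjoined to the trial space, whereas the paper uses the min--max over the mean-zero subspace with $W=V$; your identity $\int_T(\alpha+f)^2=\alpha^2|T|+\int_T f^2$ makes the two formulations equivalent.
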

\begin{proof}
Let
\[
 \mathcal H=\left\{v\in H^1\left(T\right):\int_Tv\dd X\dd Y=0\right\},\,\,\,
 V=\spanop\left\{f_1,f_2\right\}.
\]
For a real vector
$c=\left(c_1,c_2\right)^t\neq0$, linear independence gives
\[
 c^tMc=\int_T\left(c_1f_1+c_2f_2\right)^2\dd X\dd Y>0.
\]
Moreover, $c^tKc=0$ would imply that
$c_1f_1+c_2f_2$ is constant on the connected triangle $T$.
Its integral is zero, so it would vanish, contradicting linear independence.
Thus both matrices are positive definite.

Let $M^{1/2}$ denote the unique symmetric positive definite square root of $M$. The equation $Kc=\nu Mc$ is equivalent, on setting $w=M^{1/2}c$, to
\[
 \left(M^{-1/2}KM^{-1/2}\right)w=\nu w.
\]
For $v=c_1f_1+c_2f_2$, the same substitution gives
\[
 \mathcal R_T\left(v\right)
 =\frac{c^tKc}{c^tMc}
 =\frac{w^t\left(M^{-1/2}KM^{-1/2}\right)w}{w^tw}.
\]
The matrix in parentheses is symmetric positive definite. In an
orthonormal eigenbasis, this quotient is a weighted average of its two
eigenvalues, with weights given by squared components of $w$.
Its minimum and maximum are attained at the respective eigenvectors.
Since $c\mapsto v$ and $c\mapsto w$ are bijections, this proves
\[
 \nu_1=\min_{v\in V\setminus\left\{0\right\}}\mathcal R_T\left(v\right),
 \,\,\,
 \nu_2=\max_{v\in V\setminus\left\{0\right\}}\mathcal R_T\left(v\right).
\]
The Neumann min--max principle, with constants removed, gives
(see \cite[Section~6.2]{LS2017} for the variational framework)
\[
 \mu_2=\inf_{v\in\mathcal H\setminus\left\{0\right\}}\mathcal R_T\left(v\right),
 \,\,\,
 \mu_3=\inf_{\substack{W\subset\mathcal H\\\dim W=2}}
       \sup_{v\in W\setminus\left\{0\right\}}\mathcal R_T\left(v\right).
\]
Using $V\subset\mathcal H$ in the first formula and $W=V$ in the
second proves $\mu_2\leq\nu_1$ and $\mu_3\leq\nu_2$.
Finally, $Kc=\nu Mc$ implies $K^{-1}Mc=\nu^{-1}c$.
Thus $K^{-1}M$ has eigenvalues $1/\nu_1,1/\nu_2$, and its trace is
their sum. Similarly, the determinant of $M^{-1/2}KM^{-1/2}$ is the
product of its eigenvalues. Multiplicativity of the determinant gives
\[
 \tr\left(K^{-1}M\right)=\frac1{\nu_1}+\frac1{\nu_2},\qquad
 \nu_1\nu_2
 =\det\left(M^{-1/2}KM^{-1/2}\right)
 =\frac{\det K}{\det M},
\]
because $\det\left(M^{-1/2}\right)=\left(\det M\right)^{-1/2}$.
The definitions of $H$ and $G$ now give \eqref{eq:ritz}.
\end{proof}

Both means are increasing in each positive argument:
$\partial_xH\left(x,y\right)=2y^2/\left(x+y\right)^2>0$ and
$\partial_xG\left(x,y\right)=\tfrac12\sqrt{y/x}>0$, with the corresponding
formulas after interchanging $x$ and $y$.
Consequently, it is enough to find a trial pair whose Ritz means
satisfy the desired bounds. We will compute the two matrices, clear
positive denominators in \eqref{eq:ritz}, and prove positivity of the
resulting expressions. This avoids estimating the two eigenvalues
separately.

\section{Proof of the sharp inequalities and stability}\label{sec:proof}

By Lemma~\ref{lem:coordinates}, the normalized shape space is
$1<r\leq2$, $0\leq s\leq2-r$.
We split it into $1<r\leq11/10$, $11/10\leq r\leq19/10$,
and $19/10\leq r\leq2$.
In each region the argument has the same three steps: specify two
mean-zero, independent trial functions; compute their mass and energy
matrices; and verify the two scalar bounds from Lemma~\ref{lem:ritz}.
Near degeneration we work directly on $T$. In the other regions we
first define functions $f_i$ on a fixed reference triangle, then
transport them to functions $F_i$ on $T$. In those applications,
$F_1,F_2$ are the pair used in Lemma~\ref{lem:ritz}.

The endpoints $r=1$ and $r=2$ correspond to degeneration and the
equilateral triangle, respectively. The cutoffs $11/10$ and $19/10$
are convenient rational choices adapted to these three constructions;
no optimality is claimed for them. The lower cutoff makes the elementary
bounds below sufficient, and the upper cutoff matches the verified
neighborhood of equality. The forty compact-region certificates cover
the interval between them. Changing either cutoff requires checking
both the affected estimates and the resulting certificate coverage.

\subsection{Triangles close to degeneracy}\label{sec:cap}

First consider $1<r\leq11/10$. For normalized area integration, write
$\E F=A^{-1}\int_TF\dd X\dd Y$.
A function depending only on the horizontal coordinate has no vertical
derivative, so its energy does not contain the inverse square of the
small height. This motivates the linear and quadratic pair from
\cite[Section~7]{LS2009}:
\[
 f=X-a/3,\qquad v=\frac{3+a^2}{18},\qquad g=f^2-v.
\]
Here $v$ is a scalar; the moment calculation below shows that
$\E f=\E g=0$. In Lemma~\ref{lem:ritz} we take $f_1=f$ and $f_2=g$.
They are independent: if a linear combination vanishes on $T$, it is a
quadratic polynomial in $X$ vanishing on an interval, so its quadratic
and then its linear coefficient must be zero.

Let $\lambda_1,\lambda_2,\lambda_3$ be the barycentric coordinates
relative to $\left(-1,0\right),\left(1,0\right),\left(a,b\right)$: they are nonnegative, sum to $1$,
and give $X=-\lambda_1+\lambda_2+a\lambda_3$.
Write $\eta=\lambda_2$, $\zeta=\lambda_3$, and
$\lambda_1=1-\eta-\zeta$. The barycentric map from the unit simplex
$\Sigma=\left\{\left(\eta,\zeta\right):\eta\geq0,\ \zeta\geq0,
\ \eta+\zeta\leq1\right\}$ to $T$ is
\[
 \left(\eta,\zeta\right)\longmapsto
 \left(-1+2\eta+\left(a+1\right)\zeta,b\zeta\right).
\]
Its Jacobian determinant is $2b=2A$. Consequently, normalized
integration gives, for nonnegative integers $i,j,\ell$,
\begin{align*}
 \E\left(\lambda_1^i\lambda_2^j\lambda_3^\ell\right)
 &=2\int_0^1\int_0^{1-\zeta}
   \left(1-\eta-\zeta\right)^i\eta^j\zeta^\ell
   \dd\eta\dd\zeta\\
 &=2\left[\int_0^1\left(1-t\right)^it^j\dd t\right]
     \left[\int_0^1\left(1-\zeta\right)^{i+j+1}\zeta^\ell
         \dd\zeta\right]\\
 &=2\frac{i!j!}{\left(i+j+1\right)!}
       \frac{\left(i+j+1\right)!\ell!}{\left(i+j+\ell+2\right)!}
 =\frac{2i!j!\ell!}{\left(i+j+\ell+2\right)!}.
\end{align*}
The second line uses $\eta=\left(1-\zeta\right)t$; the third uses
$\int_0^1t^m\left(1-t\right)^n\dd t=m!n!/\left(m+n+1\right)!$,
valid for nonnegative integers $m,n$ and obtained by repeated integration by parts.
For every nonnegative integer $k$, the multinomial theorem gives
\begin{align*}
 \E X^k
 &=\sum_{i+j+\ell=k}\frac{k!}{i!j!\ell!}
       \left(-1\right)^ia^\ell
       \E\left(\lambda_1^i\lambda_2^j\lambda_3^\ell\right)\\
 &=\frac{2k!}{\left(k+2\right)!}
       \sum_{i+j+\ell=k}\left(-1\right)^ia^\ell.
\end{align*}
For a fixed $\ell$, the inner alternating sum is
$\sum_{i=0}^{k-\ell}\left(-1\right)^i$, which equals $1$ if
$k-\ell$ is even and $0$ otherwise. Thus one can also write
\[
 \E X^k=\frac{2}{\left(k+1\right)\left(k+2\right)}
    \sum_{\substack{0\leq\ell\leq k\\k-\ell\ \mathrm{even}}}a^\ell.
\]
For $k=1,2,3,4$, this gives
\[
 \E X=\frac a3,\quad \E X^2=\frac{1+a^2}{6},\quad
 \E X^3=\frac{a+a^3}{10},\quad
 \E X^4=\frac{1+a^2+a^4}{15}.
\]
Substituting $f=X-a/3$ gives
\[
 \E f=0,\quad \E f^2=v,\quad
 \E f^3=\frac{a\left(a^2-9\right)}{135},\quad
 \E f^4=\frac{\left(3+a^2\right)^2}{135}.
\]

In particular, $\E g=0$. Since $\nabla f=\left(1,0\right)$ and
$\nabla g=\left(2f,0\right)$, the energy entries are
$\E\left|\nabla f\right|^2=1$, $\E\left(\nabla f\cdot\nabla g\right)=2\E f=0$
and $\E\left|\nabla g\right|^2=4v$.
For the mass entries,
$\E\left(fg\right)=\E f^3$ and
\[
 \E g^2=\E f^4-v^2
 =\left(3+a^2\right)^2\left(\frac1{135}-\frac1{324}\right)
 =\frac{7\left(3+a^2\right)^2}{1620}.
\]
Thus the normalized mass and energy matrices are
\[
 M=\begin{pmatrix}
 v&a\left(a^2-9\right)/135\\a\left(a^2-9\right)/135&7\left(3+a^2\right)^2/1620
 \end{pmatrix},\,\,\, K=\begin{pmatrix}1&0\\0&4v\end{pmatrix}.
\]
These are the matrices of the pair $f,g$, divided by the common
factor $A$. This division does not change the Ritz values.
Direct calculation gives
\[
 \tr\left(K^{-1}M\right)
 =v+\frac{7\left(3+a^2\right)^2}{6480v}
 =\frac{3\left(3+a^2\right)}{40},
\]
\[
 \det M=\frac{a^6+17a^4+11a^2+35}{5400},\qquad \det K=4v.
\]
Substituting into \eqref{eq:ritz} yields
\begin{equation}\label{eq:capritz}
 H\left(\nu_1,\nu_2\right)=\frac{80}{3\left(3+a^2\right)},\,\,\,
 \nu_1\nu_2=\frac{1200\left(3+a^2\right)}{a^6+17a^4+11a^2+35}.
\end{equation}

\begin{proposition}\label{prop:cap}
Both inequalities in Theorem~\ref{thm:main} hold strictly whenever $1<r\leq11/10$.
\end{proposition}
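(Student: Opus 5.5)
The plan is to apply Lemma~\ref{lem:ritz} to the pair $f_1=f$, $f_2=g$ and then use the explicit Ritz data \eqref{eq:capritz}. Since $H$ and $G$ are increasing in each argument and $\mu_2\le\nu_1$, $\mu_3\le\nu_2$, it suffices to prove two strict bounds:
\[
H\left(\nu_1,\nu_2\right)L^2<16\pi^2,\qquad \nu_1\nu_2A^2<\frac{16\pi^4}{27}.
\]
Both Ritz quantities in \eqref{eq:capritz} depend only on $a$. The geometric factors come from Lemma~\ref{lem:coordinates}: $L=2(r+1)$ and $A^2=b^2=(r^2-1)(1-s^2)$. So the task is to bound each factor crudely over the region $1<r\le 11/10$, $0\le s\le 2-r$.

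\textbf{Range of $a$.} First record $0\le a=rs\le r(2-r)=1-(r-1)^2\le1$, so $a^2\in[0,1]$.

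\textbf{Harmonic mean.} From \eqref{eq:capritz}, $H(\nu_1,\nu_2)=80/\bigl(3(3+a^2)\bigr)\le 80/9$, and $L^2=4(r+1)^2\le 4(21/10)^2=1764/100$. Hence
\[
H\left(\nu_1,\nu_2\right)L^2\le \frac{80}{9}\cdot\frac{1764}{100}=\frac{784}{5}=156.8.
\]
Next use a rational lower bound such as $\pi>3.1415$, which gives $\pi^2>9.869$ and $16\pi^2>157.9>156.8$. This proves the first inequality strictly.

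\textbf{Main obstacle.} This harmonic-mean step is the only place where the margin is thin, about $0.7\%$. The two extreme choices, $a=0$ and $r=11/10$, are not attained simultaneously, but I would not bother exploiting that. I would simply make sure the rational bound for $\pi^2$ is stated precisely. If a sharper margin were needed, I would use $a\ge0$ together with the joint constraint, but it appears unnecessary.

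\textbf{Geometric mean.} In $\nu_1\nu_2=1200(3+a^2)/(a^6+17a^4+11a^2+35)$, bound the numerator by $1200\cdot4=4800$ using $a^2\le1$, and the denominator from below by $35$. This gives $\nu_1\nu_2\le 960/7<138$. Also $A^2\le r^2-1\le 21/100$. Therefore $\nu_1\nu_2A^2<29$, while $16\pi^4/27>16\cdot 97/27>57$. This inequality therefore has a large margin, so the cruder bounds suffice and no case analysis is needed.

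Strictness in both cases follows because the chains above contain strict inequalities, namely the bounds involving $\pi$. In particular, equality cannot occur in this region, consistent with the characterization of the equality case at $(r,s)=(2,0)$.
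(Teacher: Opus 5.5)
Your proposal is correct and follows the paper's proof essentially line for line. It uses $0\le a^2\le1$ in \eqref{eq:capritz} to get $H(\nu_1,\nu_2)\le 80/9$ and $\nu_1\nu_2\le 960/7$, combines these with $L\le 21/5$ and $A^2\le 21/100$ to reach $784/5$ and $144/5$, and closes both strict inequalities with a rational lower bound for $\pi$; the paper uses $\pi>157/50$ for the harmonic mean and just $\pi>3$, giving $48>144/5$, for the geometric mean.
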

\begin{proof}
We now bound the geometric parameters throughout this region.
Since $0\leq s\leq2-r$,
\[
 0\leq a=rs\leq r\left(2-r\right)
 =1-\left(r-1\right)^2\leq1.
\]
Using $0\leq a^2\leq1$ in \eqref{eq:capritz}, we obtain
\[
 H\left(\nu_1,\nu_2\right)\leq\frac{80}{9},\,\,\,
 \nu_1\nu_2\leq\frac{4800}{35}=\frac{960}{7}.
\]
Also, $r\leq11/10$ and $0\leq1-s^2\leq1$ give
\[
 L=2\left(r+1\right)\leq\frac{21}{5},\,\,\,
 A^2=\left(r^2-1\right)\left(1-s^2\right)
 \leq\frac{121}{100}-1=\frac{21}{100}.
\]
Multiplying the bounds, with all factors nonnegative, yields
\[
 H\left(\nu_1,\nu_2\right)L^2
 \leq\frac{80}{9}\frac{441}{25}=\frac{784}{5},\,\,\,
 \nu_1\nu_2 A^2
 \leq\frac{960}{7}\frac{21}{100}=\frac{144}{5}.
\]
These bounds are strictly below the claimed constants: the elementary
bound $\pi>157/50$, also implied by Appendix~\ref{app:constants}, gives
\[
 16\pi^2>\frac{394384}{2500}>\frac{784}{5},\,\,\,
 \frac{16\pi^4}{27}>\frac{16\cdot3^4}{27}=48>\frac{144}{5}.
\]
The variational bounds and the monotonicity of $H$ and $G$ finish the proof.
For the geometric mean, taking positive square roots of the product
inequality gives $G\left(\mu_2,\mu_3\right)A<4\pi^2/\left(3\sqrt3\right)$.
\end{proof}

The choice $11/10$ follows from an explicit range of validity of these
bounds. For a general cutoff $1<r_0<2$, the same estimates on
$1<r\leq r_0$ give
\[
 H\left(\nu_1,\nu_2\right)L^2
 \leq\frac{320}{9}\left(r_0+1\right)^2,
 \qquad
 \nu_1\nu_2 A^2\leq\frac{960}{7}\left(r_0^2-1\right).
\]
Both target inequalities follow if
\[
 r_0<\min\left\{
 \frac{3\pi}{2\sqrt5}-1,
 \sqrt{1+\frac{7\pi^4}{1620}}
 \right\}.
\]
The first threshold is approximately $1.107444$ and is the smaller
one, so $r_0=11/10$ leaves a strict margin. These are sufficient
conditions for the present estimates, not intrinsic limits of the
spectral inequalities. In particular, extending this trial construction
to $r_0=12/10$ would fail: at $\left(r,s\right)=\left(6/5,0\right)$,
formula~\eqref{eq:capritz} gives
$H\left(\nu_1,\nu_2\right)L^2=7744/45>16\pi^2$.
This concerns the trial upper bound, not the actual Neumann eigenvalues.

\subsection{Shapes away from degeneracy and equality}\label{sec:compact}

Away from degeneration and equality, we can approximate the two low
modes by polynomials. A pair chosen for one shape is then held fixed
on a small parameter rectangle. To prove that its bounds hold
throughout that rectangle, we use the following Bernstein criterion.
It gives a sufficient test for polynomial positivity through finitely
many coefficient bounds. Write $B_i^n\left(u\right)=\binom ni u^i\left(1-u\right)^{n-i}$. These functions are nonnegative on $\left[0,1\right]$ and sum to one.
\begin{lemma}\label{lem:bernstein}
Let $n,m$ be nonnegative integers, and let
$P\left(x,y\right)=\sum_{p=0}^n\sum_{q=0}^m a_{pq}x^py^q$.
Thus $P$ has bidegree at most $\left(n,m\right)$: its degrees in
$x$ and $y$ separately are at most $n$ and $m$.
On a rectangle $R=\left[x_0,x_1\right]\times\left[y_0,y_1\right]$,
where $x_0<x_1$ and $y_0<y_1$, its Bernstein coefficients are
\begin{equation*}
 b_{ij}=\sum_{p=0}^n\sum_{q=0}^m
 a_{pq}\,W_{i,p}^{n}\left(x_0,x_1\right)
 W_{j,q}^{m}\left(y_0,y_1\right),
 \,\,\, 0\leq i\leq n,\quad0\leq j\leq m,
\end{equation*}
where
\[
 W_{i,p}^{n}\left(a,b\right)=
 \sum_{k=0}^{\min\left(i,p\right)}
 \frac{\binom ik}{\binom nk}\binom pk a^{p-k}\left(b-a\right)^k.
\]
If every $b_{ij}>0$, then $P>0$ throughout $R$, including its boundary.
\end{lemma}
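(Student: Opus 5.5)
The plan is to reduce to the unit square by affine substitution and then use the standard Bernstein basis identities. Set $x=x_0+(x_1-x_0)u$ and $y=y_0+(y_1-y_0)w$ with $(u,w)\in[0,1]^2$. Then $\tilde P(u,w)=P(x,y)$ is a polynomial of bidegree at most $(n,m)$ in $(u,w)$. The claim is that
\[
\tilde P(u,w)=\sum_{i=0}^n\sum_{j=0}^m b_{ij}B_i^n(u)B_j^m(w),
\]
with $b_{ij}$ exactly as in the statement. Once this holds the conclusion is immediate. Each product $B_i^n(u)B_j^m(w)$ is nonnegative on $[0,1]^2$, and the products sum to one, because $\sum_iB_i^n=(u+1-u)^n=1$ and likewise in $w$. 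So $\tilde P(u,w)$ is a convex combination of the $b_{ij}$, and therefore $\tilde P\ge\min_{i,j}b_{ij}>0$ on the closed square. This includes the boundary, since nothing degenerates there.

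The univariate identity comes first. Expand $x^p=(x_0+hu)^p=\sum_{k=0}^p\binom pk x_0^{p-k}h^ku^k$, where $h=x_1-x_0$. Then convert monomials to the Bernstein basis via the classical formula
\[
u^k=\sum_{i=k}^n\frac{\binom ik}{\binom nk}B_i^n(u),\qquad 0\le k\le n.
\]
To prove this formula, compute the $t^k$ coefficient of $\sum_i\binom ni(1-u)^{n-i}(tu)^i=(1-u+tu)^n=(1+(t-1)u)^n$. Alternatively, differentiate $(u+v)^n=\sum_i\binom ni u^iv^{n-i}$ $k$ times in $u$ and set $v=1-u$.

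Substituting gives
\[
x^p=\sum_{i=0}^n\Bigl(\sum_{k=0}^{\min(i,p)}\frac{\binom ik}{\binom nk}\binom pk x_0^{p-k}h^k\Bigr)B_i^n(u)=\sum_iW_{i,p}^n(x_0,x_1)B_i^n(u).
\]
The upper limit $\min(i,p)$ arises because $k\le p$ from the binomial expansion and $\binom ik=0$ for $k>i$. Here we need $p\le n$ so that every $k\le p$ is admissible in the monomial conversion, and this is guaranteed by the bidegree bound. The same identity holds in $y$ with $m$.

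For the bivariate case, multiply the two expansions for $x^py^q$, multiply by $a_{pq}$, and sum over $p$ and $q$. Interchanging the finite sums yields precisely the displayed $b_{ij}$.

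The only step needing care is the monomial-to-Bernstein conversion together with its index ranges. I would verify it through the generating-function computation above, since it makes the coefficient $\binom ik/\binom nk$ transparent: $\binom ni\binom ik/\binom nk=\binom{n-k}{i-k}$, which is consistent with the expansion of $u^k(u+1-u)^{n-k}$. Positivity then follows from the convex-combination argument with no further obstacles.
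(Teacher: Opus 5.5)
Your proposal is correct and follows the paper's proof essentially step for step: you reduce to the unit square, prove the monomial-to-Bernstein identity (the paper does this via $\binom ni\binom ik=\binom nk\binom{n-k}{i-k}$ and the binomial theorem, which is your closing remark), expand the translated monomial to get $W^n_{i,p}$, take the tensor product, and conclude by the convex-combination bound. One small fix: in your generating-function derivation, put $t=1+s$ and read off the coefficient of $s^k$, i.e.\ of $(t-1)^k$; the coefficient of $t^k$ itself only returns $B_k^n(u)$.
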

\begin{proof}
First, for $0\leq k\leq n$, use
$\binom ni\binom ik=\binom nk\binom{n-k}{i-k}$ to obtain
\begin{align*}
 \sum_{i=k}^n\frac{\binom ik}{\binom nk}B_i^n\left(u\right)
 &=\sum_{i=k}^n\binom{n-k}{i-k}u^i\left(1-u\right)^{n-i}\\
 &=u^k\sum_{j=0}^{n-k}\binom{n-k}{j}
            u^j\left(1-u\right)^{n-k-j}
 =u^k.
\end{align*}
The last step is the binomial theorem. Expanding a translated monomial
and then applying this identity gives
\begin{align*}
 \left(a+\left(b-a\right)u\right)^p
 &=\sum_{k=0}^{p}\binom pk a^{p-k}\left(b-a\right)^k u^k\\
 &=\sum_{i=0}^n
   \left[\sum_{k=0}^{\min\left(i,p\right)}
     \binom pk a^{p-k}\left(b-a\right)^k
     \frac{\binom ik}{\binom nk}\right]B_i^n\left(u\right).
\end{align*}
This proves the formula for $W_{i,p}^n$. Apply it separately to
$x=x_0+\left(x_1-x_0\right)u$ and
$y=y_0+\left(y_1-y_0\right)v$ and multiply the resulting expansions.
Collecting terms yields
\[
 P\left(x,y\right)=\sum_{i=0}^n\sum_{j=0}^m
 b_{ij}B_i^n\left(u\right)B_j^m\left(v\right).
\]
For $0\leq u,v\leq1$, every weight in this sum is nonnegative, and
the sum of the weights is
\[
 \left(\sum_{i=0}^n B_i^n\left(u\right)\right)
 \left(\sum_{j=0}^m B_j^m\left(v\right)\right)=1.
\]
Consequently $P\left(x,y\right)\geq\min_{i,j}b_{ij}>0$,
which also covers the four edges and corners.
\end{proof}

We now construct the trial spaces for this region.
For $r<2$, define $\tau=s/\left(2-r\right)$, so that $0\leq\tau\leq1$ and $s=\left(2-r\right)\tau$. The compact region becomes
\begin{equation*}
 \mathcal R=\left[11/10,19/10\right]\times\left[0,1\right]
\end{equation*}
in the variables $\left(r,\tau\right)$.

We use the reference triangle $R_0$ whose three vertices are $\left(-1,0\right)$, $\left(1,0\right)$ and $\left(0,1\right)$. Its moments are
\begin{equation}\label{eq:refmoments}
 m_{pq}=\int_{R_0}x^pz^q\dd x\dd z=
 \begin{cases}2p!q!/\left(p+q+2\right)!,&p\text{ even},\\0,&p\text{ odd}.\end{cases}
\end{equation}

The formula follows by integrating first over
$-\left(1-z\right)<x<1-z$ and then over $0<z<1$. Odd powers of $x$
integrate to zero; for even $p$ the result is
\[
 \frac{2}{p+1}\int_0^1z^q\left(1-z\right)^{p+1}\dd z
 =\frac{2p!q!}{\left(p+q+2\right)!}.
\]
The triangle $R_0$ has area one, so subtracting $m_{pq}$ removes the
mean. We use the twenty centered monomials
\[
 \psi_{pq}=x^pz^q-m_{pq},\,\,\,1\leq p+q\leq5.
\]
They are ordered by increasing total degree and then increasing $p$.
They are independent because their nonconstant monomials are distinct.
Degree five is a practical choice: it gives enough freedom for the
forty certified pairs below while keeping every integral elementary.
We do not need to show that it is the smallest possible degree.
Every pair of fixed integer vectors $v_1,v_2\in\mathbb Z^{20}$ defines
\[f_i=10^{-10}\sum_{p,q}\left(v_i\right)_{pq}\psi_{pq}.\]
The common factor $10^{-10}$ only keeps the certificate bounds small; it cancels from all Ritz values. Accordingly, contraction of a basis matrix with integer vectors includes the factor $10^{-20}$.
The matrices $\mathsf M,\mathsf X,\mathsf Y,\mathsf C$ below have
size $20\times20$ and refer to this monomial basis; $M,X,Y,C$
will be the corresponding $2\times2$ matrices for the chosen pair.
Indices $\left(p,q\right)$ and $\left(u,v\right)$
range over the twenty exponent pairs just listed. Define
\begin{align*}
 \mathsf M_{pq,uv}&=\int_{R_0}\psi_{pq}\psi_{uv}\dd x\dd z,\\
 \mathsf X_{pq,uv}&=\int_{R_0}\left(\psi_{pq}\right)_x\left(\psi_{uv}\right)_x\dd x\dd z,\\
 \mathsf Y_{pq,uv}&=\int_{R_0}\left(\psi_{pq}\right)_z\left(\psi_{uv}\right)_z\dd x\dd z,\\
 \mathsf C_{pq,uv}&=\int_{R_0}\left[\left(\psi_{pq}\right)_x\left(\psi_{uv}\right)_z
                         +\left(\psi_{pq}\right)_z\left(\psi_{uv}\right)_x\right]\dd x\dd z.
\end{align*}
Since $R_0$ has area one, expanding the centered product gives
\[
 \mathsf M_{pq,uv}=m_{p+u,q+v}-m_{pq}m_{uv}-m_{uv}m_{pq}
                    +m_{pq}m_{uv}
                 =m_{p+u,q+v}-m_{pq}m_{uv}.
\]
The centering constants disappear on differentiation:
\[
 \left(\psi_{pq}\right)_x=px^{p-1}z^q,\,\,\,
 \left(\psi_{pq}\right)_z=qx^pz^{q-1}.
\]
Multiplying these derivatives and using \eqref{eq:refmoments} gives
\begin{align*}
 \mathsf X_{pq,uv}&=pu\,m_{p+u-2,q+v},\\
 \mathsf Y_{pq,uv}&=qv\,m_{p+u,q+v-2},\\
 \mathsf C_{pq,uv}&=pv\,m_{p+u-1,q+v-1}+qu\,m_{p+u-1,q+v-1}
                 =\left(pv+qu\right)m_{p+u-1,q+v-1}.
\end{align*}
A term whose derivative coefficient vanishes is zero and does not require a moment with a negative index.
For example, the mass matrix for the selected pair is
\[
 M_{ij}=10^{-20}\sum_{p,q}\sum_{u,v}
 \left(v_i\right)_{pq}\left(v_j\right)_{uv}\mathsf M_{pq,uv},\,\,\, 1\leq i,j\leq2,
\]
and $X,Y,C$ are obtained from $\mathsf X,\mathsf Y,\mathsf C$ in
the same way. The sums are over the ordered exponent pairs above.
Equivalently, $M,X,Y,C$ are the four integral forms just defined with
$\psi_{pq},\psi_{uv}$ replaced by $f_i,f_j$. The exact checks below
give $M_{11}>0$ and $\det M>0$ for every recorded pair. These imply
independence: the integral of $\left(c_1f_1+c_2f_2\right)^2$ is
$c^tMc>0$ for every nonzero $c$.
We use affine transplantation as in \cite[Section~4.2]{LS2009},
written on $R_0$. Define the map and the physical trial functions by
\[
 \Phi\left(x,z\right)=\left(x+az,bz\right),\,\,\,
 F_i\left(X,Y\right)
 =f_i\left(X-aY/b,Y/b\right).
\]
The map sends the vertices of $R_0$ to those of $T$ and satisfies
$\det D\Phi=b>0$. In particular,
$\int_TF_i\dd X\dd Y=b\int_{R_0}f_i\dd x\dd z=0$.
The affine map is a bijection, so it also preserves linear independence.
Thus $F_1,F_2$ satisfy the hypotheses of Lemma~\ref{lem:ritz}.
By the chain rule, evaluated at $\left(X,Y\right)=\Phi\left(x,z\right)$,
\[
 \partial_X F_i=\left(f_i\right)_x,\,\,\,
 \partial_Y F_i=\frac{\left(f_i\right)_z-a\left(f_i\right)_x}{b}.
\]
Writing $M^{\mathrm{phys}},K^{\mathrm{phys}}$ for the mass and energy
matrices computed over $T$, change of variables gives
\begin{align*}
 M^{\mathrm{phys}}_{ij}
 &=\int_TF_iF_j\dd X\dd Y=bM_{ij},\\
 K^{\mathrm{phys}}_{ij}
 &=\int_T\nabla F_i\cdot\nabla F_j\dd X\dd Y\\
 &=b\int_{R_0}\left[\left(f_i\right)_x\left(f_j\right)_x
 +\frac{\left(\left(f_i\right)_z-a\left(f_i\right)_x\right)
              \left(\left(f_j\right)_z-a\left(f_j\right)_x\right)}{b^2}
                    \right]\dd x\dd z\\
 &=\frac{b}{d}\left[\left(d+a^2\right)X_{ij}+Y_{ij}-aC_{ij}\right].
\end{align*}
Thus the same positive factor $b$ occurs in both physical matrices.
Dividing $K^{\mathrm{phys}}c=\nu M^{\mathrm{phys}}c$ by $b$ leaves the
equivalent generalized eigenvalue problem $Kc=\nu Mc$, where
\begin{equation*}
 K=\frac Nd,\,\,\,
 N=\left(d+a^2\right)X+Y-aC
  =\left(r^2+s^2-1\right)X+Y-rsC.
\end{equation*}
The last equality uses $d+a^2=r^2+s^2-1$ and $a=rs$ from
Lemma~\ref{lem:coordinates}.
Write $D_N=\det N$ and $\mathcal T_N=\tr\left(\adj\left(N\right)M\right)$. Provided $M$ is positive definite, $N$ is positive definite by Lemma~\ref{lem:ritz}. Direct substitution in \eqref{eq:ritz} shows that it suffices to prove
\begin{align}
 P_H&=2\pi^2d\mathcal T_N-\left(r+1\right)^2D_N>0,\label{eq:compactH}\\
 P_G&=16\pi^4d\det M-27D_N>0.\label{eq:compactG}
\end{align}

Both polynomials can be computed directly from the entries of the
two symmetric matrices:
\[
 D_N=N_{11}N_{22}-N_{12}^2,\qquad
 \mathcal T_N=N_{22}M_{11}+N_{11}M_{22}-2N_{12}M_{12}.
\]
Also, $H\left(\nu_1,\nu_2\right)=2D_N/\left(d\mathcal T_N\right)$ and
$\nu_1\nu_2=D_N/\left(d^2\det M\right)$. Thus the normalized deficits are
\begin{equation*}
 1-\frac{H\left(\nu_1,\nu_2\right)L^2}{16\pi^2}
 =\frac{P_H}{2\pi^2d\mathcal T_N},\,\,\,
 1-\frac{27\nu_1\nu_2 A^2}{16\pi^4}
 =\frac{P_G}{16\pi^4d\det M}.
\end{equation*}

To keep the certificate coefficients rational, set
\[
 \pi_- =\frac{3141592653589793}{10^{15}}<\pi.
\]
Replace $\pi$ by $\pi_-$ in \eqref{eq:compactH}--\eqref{eq:compactG}, and denote the resulting rational polynomials by $\widehat P_H,\widehat P_G$. Since $d,\mathcal T_N,\det M>0$, positivity of these smaller polynomials suffices.

The partition was selected by testing how far each trial pair remained
valid. At the centre of a candidate rectangle, we solved the generalized
Ritz problem in the twenty-dimensional space and retained the two
vectors with the smallest Ritz values. This gives polynomial
approximations to the low modes at that shape. Each vector was divided
by its largest absolute component and rounded to multiples of
$10^{-10}$. Multiplication by $10^{10}$ gives the recorded integer
vectors. The rational trial pair was then fixed on the whole rectangle,
and its mass matrix was checked for positive definiteness.
We accepted a rectangle if Bernstein tests proved both polynomial inequalities. In this search,
the positive factor $r+1$ was first cancelled from the harmonic target
to lower its degree. The final verification below tests the displayed,
uncancelled $\widehat P_H$ and $\widehat P_G$ directly. A failed Bernstein test led to
bisection and a new pair at each new centre; failure of this sufficient
test does not imply failure of the spectral inequality.

The direction of bisection compares the side lengths after scaling the
initial rectangle to a unit square. Its $r$ width is
$19/10-11/10=4/5$, whereas its $\tau$ width is $1$. Thus the normalized
widths of a cell are $\left(5/4\right)\left(r_1-r_0\right)$ and
$\tau_1-\tau_0$. We bisect the longer one, choosing the $r$ direction
in a tie. This explains the factor $5/4$: it only compensates for the
different initial widths and is not a weight in either eigenvalue
bound. Starting from $\mathcal R$, this rule and the acceptance test
produced forty cells. The proof uses their fixed rational data and
verifies their coverage; it does not depend on numerical accuracy of
the eigensolver that suggested them.

Appendix~\ref{app:rectangles} lists all forty rectangles.
The accompanying \texttt{verify.py} records the complete pairs of
integer vectors, in the same order, in its \texttt{CELLS} definition.
Each record gives $\left(r_0,r_1,\tau_0,\tau_1\right)$ followed by the two
twenty-component vectors in the explicitly specified monomial order.
These fixed integers are the only input to the compact-region certificate.
All matrices and signs are recomputed from them and \eqref{eq:refmoments}.

\begin{lemma}\label{lem:compactcertificate}
For each of the forty recorded rectangles, the associated mass matrix is positive definite and all Bernstein coefficients of $\widehat P_H$ and $\widehat P_G$ are strictly positive. The rectangles have pairwise disjoint interiors, lie in $\mathcal R$, and have total area $4/5$.
\end{lemma}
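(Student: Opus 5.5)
This lemma is a finite exact computation, so the proof is a description of a deterministic verification in rational arithmetic, carried out by \texttt{verify.py} from the recorded integer data. For each of the forty records $\left(r_0,r_1,\tau_0,\tau_1,v_1,v_2\right)$ I would proceed in four steps.

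First, I build the $20\times20$ matrices $\mathsf M,\mathsf X,\mathsf Y,\mathsf C$ exactly over $\Q$ from the moment formula \eqref{eq:refmoments}. Contracting them with $v_1,v_2$ and the factor $10^{-20}$ gives the rational $2\times2$ matrices $M,X,Y,C$. Positive definiteness of $M$ is then the two exact sign checks $M_{11}>0$ and $\det M>0$ (Sylvester's criterion).

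Second, I substitute $s=\left(2-r\right)\tau$ into the formulas of Lemma~\ref{lem:coordinates}. This gives $d=\left(r^2-1\right)\left(1-s^2\right)$, $N=\left(r^2+s^2-1\right)X+Y-rsC$, $D_N$ and $\mathcal T_N$ as polynomials in $\left(r,\tau\right)$ with rational coefficients. I then form $\widehat P_H$ and $\widehat P_G$ with $\pi_-$ in place of $\pi$, so $\pi_-^2$ and $\pi_-^4$ are exact rationals. Every entry of $N$ has bidegree at most $\left(4,2\right)$: note $s^2$ contributes $\tau^2\left(2-r\right)^2$. So $D_N$, $d\,\mathcal T_N$ and $d\det M$ have bidegrees bounded by roughly $\left(8,4\right)$. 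I would simply read off the exact bidegree $\left(n,m\right)$ from the expanded polynomial rather than bound it a priori.

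Third, I apply the formula of Lemma~\ref{lem:bernstein} on the rectangle $\left[r_0,r_1\right]\times\left[\tau_0,\tau_1\right]$ and check that every $b_{ij}$ is strictly positive, using exact \texttt{Fraction} arithmetic so that no rounding enters.

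Fourth, I verify the covering claims by exact arithmetic on the endpoints:
\begin{itemize}
\item containment: $11/10\le r_0<r_1\le19/10$ and $0\le\tau_0<\tau_1\le1$;
\item disjoint interiors: for each of the $\binom{40}{2}$ pairs, the open intervals are disjoint in at least one coordinate;
\item area: $\sum\left(r_1-r_0\right)\left(\tau_1-\tau_0\right)=4/5$, the area of $\mathcal R$.
\end{itemize}
Since the cells are closed, lie in $\mathcal R$, and have disjoint interiors, a sum of areas equal to that of $\mathcal R$ forces their union to be $\mathcal R$ up to a null set. Because a finite union of closed sets is closed, the union is all of $\mathcal R$. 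That full coverage is the use the paper needs, even though the lemma states only the area.

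There is no conceptual obstacle; the logic is fully contained in Lemmas~\ref{lem:ritz} and \ref{lem:bernstein}. The main practical risk is the size and reliability of the exact computation. The polynomials have moderate bidegree, and the rational coefficients carry denominators from $\pi_-^4$, from $10^{-20}$ and from the moments. The whole pipeline therefore has to be done exactly and be independently checkable. I would document the bidegrees used, print the minimal Bernstein coefficient for each cell as a sanity margin, and confirm that the code uses the stated monomial order and the uncancelled $\widehat P_H$ rather than the reduced version from the search.
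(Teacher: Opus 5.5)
Your proposal is correct and essentially the paper's own exact verification. The paper also builds rational Gram matrices from \eqref{eq:refmoments}, checks $M_{11}>0$ and $\det M>0$ (Sylvester), computes exact Bernstein coefficients via Lemma~\ref{lem:bernstein}, and checks pairwise interior-disjointness and the area sum $4/5$; it defers the closedness argument for full coverage to Proposition~\ref{prop:compact}, as you anticipate.

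One small slip: $(2-r)^2\tau^2$ is quadratic in $r$, so each entry of $N$ has bidegree at most $(2,2)$, not $(4,2)$. Hence $\widehat P_H,\widehat P_G$ have bidegrees at most $(6,4)$ and $(4,4)$, giving the paper's $35+25$ coefficients per cell, which is what reading off the exact degrees, as you propose, yields anyway.
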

\begin{proof}[Exact verification]
All calculations use rational arithmetic. To illustrate the matrix
formulas, take
$\psi_{01}=z-1/3$ and $\psi_{10}=x$. The required moments are
\[
 m_{00}=1,\quad m_{01}=\frac13,\quad
 m_{02}=m_{20}=\frac16,\quad m_{10}=m_{11}=0.
\]
In this order, their two-by-two matrices are
\[
 M=\begin{pmatrix}1/18&0\\0&1/6\end{pmatrix},\quad
 X=\begin{pmatrix}0&0\\0&1\end{pmatrix},\quad
 Y=\begin{pmatrix}1&0\\0&0\end{pmatrix},\quad
 C=\begin{pmatrix}0&1\\1&0\end{pmatrix}.
\]
For example, $M_{11}=m_{02}-m_{01}^2=1/18$ and
$C_{12}=\int_{R_0}1\dd x\dd z=1$. Thus
\[
 N=\begin{pmatrix}1&-rs\\-rs&r^2+s^2-1\end{pmatrix},\,\,\,
 \det N=\left(r^2-1\right)\left(1-s^2\right)=d.
\]
This pair illustrates the assembly rule; the certificate uses the
twenty-component vectors recorded in the supplementary \texttt{verify.py}.

For the recorded vectors, the same finite sums give the following values. For the first recorded cell,
\[
 R_1=\left[\frac{11}{10},\frac{13}{10}\right]
          \times\left[0,\frac14\right],\,\,\,
 M_{11}=\frac{24742178098073017795314113}
                {218295000000000000000000000}.
\]
At its lower left corner $s=a=0$, $d=21/100$, and
$N=\left(21/100\right)X+Y$. Since the first Bernstein coefficient
equals the value at that corner, one can check the first coefficients
without a change of basis:
\begin{align*}
 b_{00}\left(\widehat P_H\right)
 &=\frac{21}{50}\pi_-^2\mathcal T_N
   -\frac{441}{100}D_N>\frac{1980}{10^6},\\
 b_{00}\left(\widehat P_G\right)
 &=\frac{84}{25}\pi_-^4\det M-27D_N
   >\frac{507869}{10^6}.
\end{align*}
Here $D_N$ and $\mathcal T_N$ are evaluated at that corner.
For the remaining coefficients, substitute
$r=11/10+u/5$ and $\tau=v/4$ into each target and use
Lemma~\ref{lem:bernstein}. The exact calculations for the entire first
cell give the deliberately rounded rational bounds
\[
 M_{11}>\frac{113342}{10^6},\quad
 \det M>\frac{2321}{10^6},\quad
 \min b_{ij}\left(\widehat P_H\right)>\frac{1980}{10^6},\quad
 \min b_{ij}\left(\widehat P_G\right)>\frac{485589}{10^6}.
\]
These are lower bounds on rational numbers, not decimal approximations
used in place of exact comparisons.

The same operations are applied to every cell. The degrees can be
checked before any coefficient conversion: after $s=\left(2-r\right)\tau$,
each entry of $N$ has bidegree at most $\left(2,2\right)$,
$d$ has bidegree at most $\left(4,2\right)$,
$D_N$ has bidegree at most $\left(4,4\right)$, and
$\mathcal T_N$ has bidegree at most $\left(2,2\right)$.
Therefore $\widehat P_H$ and $\widehat P_G$ have bidegrees at most
$\left(6,4\right)$ and $\left(4,4\right)$.
There are $\left(6+1\right)\left(4+1\right)=35$ and
$\left(4+1\right)^2=25$ Bernstein coefficients per cell,
or $40\left(35+25\right)=2400$ in total.
The supplementary verifier performs these finite sums using exact fractions
and verifies the following bounds for every recorded cell:
\begin{equation*}
 M_{11}>\frac{18010}{10^6},\quad \det M>\frac{125}{10^6},\quad
 \min b_{ij}\left(\widehat P_H\right)>\frac{23}{10^6},\quad
 \min b_{ij}\left(\widehat P_G\right)>\frac{1220}{10^6}.
\end{equation*}
The first two inequalities imply positive definiteness of the symmetric
two-by-two matrix $M$ by Sylvester's criterion. They also prove that
each recorded pair of trial functions is linearly independent.

The verifier also checks the covering. Every
rectangle $R_j=\left[r_j^-,r_j^+\right]\times
\left[\tau_j^-,\tau_j^+\right]$ has endpoints in $\mathcal R$ and
strictly positive side lengths. For each pair $j\neq k$, the verifier
checks
\[
 \min\left(r_j^+,r_k^+\right)\leq\max\left(r_j^-,r_k^-\right)
 \quad\hbox{or}\quad
 \min\left(\tau_j^+,\tau_k^+\right)
 \leq\max\left(\tau_j^-,\tau_k^-\right).
\]
This is precisely disjointness of their interiors. Finally, it adds
the forty rational areas and obtains
\[
 \sum_{j=1}^{40}\left(r_j^+-r_j^-\right)
                  \left(\tau_j^+-\tau_j^-\right)
 =\frac45=\left(\frac{19}{10}-\frac{11}{10}\right)\cdot1.
\]
The independent bound $\pi_-<\pi$ used to form the smaller targets
is certified in Appendix~\ref{app:constants}.
\end{proof}

\begin{proposition}\label{prop:compact}
Both inequalities in Theorem~\ref{thm:main} hold strictly on $11/10\leq r\leq19/10$.
\end{proposition}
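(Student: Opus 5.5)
The plan is to assemble Proposition~\ref{prop:compact} from Lemma~\ref{lem:compactcertificate}, Lemma~\ref{lem:bernstein} and Lemma~\ref{lem:ritz}. Fix a triangle with $11/10\leq r\leq19/10$. Since $r<2$, the variable $\tau=s/\left(2-r\right)\in\left[0,1\right]$ is defined, so the shape corresponds to a point of $\mathcal R$.

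\textbf{Covering.} First I show that this point lies in one of the forty rectangles. The rectangles are closed, lie in $\mathcal R$, have pairwise disjoint interiors and total area $4/5=|\mathcal R|$. Their union is therefore a closed subset of $\mathcal R$ of full measure. If some point of $\mathcal R$ were missed, a small open neighbourhood of it, relative to $\mathcal R$, would also be missed, and this neighbourhood has positive area. That contradicts full measure, because disjoint interiors make the measure of the union equal to the sum of the areas. Hence every $\left(r,\tau\right)\in\mathcal R$ lies in some closed cell $R_j$.

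\textbf{Trial pair and reduction.} On $R_j$ I take the recorded pair $f_1,f_2$. Lemma~\ref{lem:compactcertificate} gives $M$ positive definite, so the $f_i$ are independent and mean-zero on $R_0$. The transplanted functions $F_1,F_2$ are then admissible in Lemma~\ref{lem:ritz}, and $N$ is positive definite. Bernstein positivity, via Lemma~\ref{lem:bernstein} (which includes the boundary of the cell), gives $\widehat P_H>0$ and $\widehat P_G>0$ at $\left(r,\tau\right)$. Since $\pi_-<\pi$ and $d,\mathcal T_N,\det M>0$, passing from $\pi_-$ to $\pi$ only increases these expressions, so $P_H>\widehat P_H>0$ and $P_G>\widehat P_G>0$. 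For this step I need $\mathcal T_N>0$: it equals $\det N\cdot\tr\left(N^{-1}M\right)$, and $\tr\left(N^{-1}M\right)>0$ because $N^{-1}M$ is similar to $M^{1/2}N^{-1}M^{1/2}$, which is positive definite.

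\textbf{Back to the eigenvalues.} The deficit formulas then give
\[
 H\left(\nu_1,\nu_2\right)L^2<16\pi^2,
 \qquad
 27\nu_1\nu_2A^2<16\pi^4 .
\]
Lemma~\ref{lem:ritz} gives $\mu_2\leq\nu_1$ and $\mu_3\leq\nu_2$, and monotonicity of $H$ and $G$ transfers the strict bounds to $\mu_2,\mu_3$. Taking square roots yields $G\left(\mu_2,\mu_3\right)A<4\pi^2/\left(3\sqrt3\right)$.

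\textbf{Remaining checks.} The normalization is harmless because both target quantities are scale invariant. The one real obstacle is the covering step, namely deducing exact coverage of $\mathcal R$ from the area identity. I handle it with the closedness and measure argument above. Alternatively, one could use the fact that the cells arise from dyadic bisection, but the measure argument needs no extra data.
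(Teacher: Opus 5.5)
Your proposal is correct and follows the paper's proof essentially step by step: the same covering argument (closed union of full area, so a missed point would have a missed neighbourhood of positive area), the same positivity of $N$ and of $\mathcal T_N$, the same passage from $\widehat P_H,\widehat P_G$ to $P_H,P_G$ via $\pi_-<\pi$, and the same use of Lemma~\ref{lem:ritz} with monotonicity of the means. The only differences are cosmetic: you show $\mathcal T_N>0$ via the similarity $N^{-1}M\sim M^{1/2}N^{-1}M^{1/2}$ where the paper uses $N^{-1/2}MN^{-1/2}$, and the zero means of $f_1,f_2$ come from the centering of the monomials $\psi_{pq}$, not from positive definiteness of $M$, which supplies only independence.
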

\begin{proof}
Let $F=\bigcup_{j=1}^{40}R_j$ be the union of the closed
rectangles. By Lemma~\ref{lem:compactcertificate}, $F\subset\mathcal R$.
Because there are only finitely many rectangles, $F$ is closed.
Their boundaries are finite unions of line segments and have area zero;
disjointness of the interiors therefore gives
\[
 \operatorname{area}\left(F\right)
 =\sum_{j=1}^{40}\operatorname{area}\left(R_j\right)
 =\frac45=\operatorname{area}\left(\mathcal R\right).
\]
If some point $p\in\mathcal R$ were absent from $F$, closedness would
give an $\varepsilon>0$ such that
$B_{\varepsilon}\left(p\right)\cap F=\varnothing$, where
$B_{\varepsilon}\left(p\right)$ is the open disk of radius $\varepsilon$.
The intersection of this disk with $\mathcal R$ has positive area,
even when $p$ lies on an edge or a corner. This contradicts the area
equality. Hence $F=\mathcal R$, including all its boundary points.

Now fix a normalized triangle with $11/10\leq r\leq19/10$.
Its parameter $\tau=s/\left(2-r\right)$ belongs to $\left[0,1\right]$,
so $\left(r,\tau\right)$ lies in at least one recorded cell.
Use that cell's transplanted pair $F_1,F_2$. Its mass matrix is
positive definite, and Lemma~\ref{lem:ritz} gives positive energy for
every nonzero linear combination. Thus $N$ is positive definite and
$D_N>0$. Also,
$\mathcal T_N=D_N\tr\left(N^{-1}M\right)>0$: the trace equals that
of the positive definite matrix $N^{-1/2}MN^{-1/2}$.
Lemma~\ref{lem:bernstein} and the coefficient bounds give
$\widehat P_H>0$ and $\widehat P_G>0$ at the chosen point.
Since $d>0$, $\det M>0$, and $\pi>\pi_->0$,
\begin{align*}
 P_H-\widehat P_H&=2\left(\pi^2-\pi_-^2\right)d\mathcal T_N>0,\\
 P_G-\widehat P_G&=16\left(\pi^4-\pi_-^4\right)d\det M>0.
\end{align*}
The two positive target polynomials give, respectively,
\begin{align*}
 H\left(\nu_1,\nu_2\right)L^2
 &=\frac{8\left(r+1\right)^2D_N}{d\mathcal T_N}
 <16\pi^2,\\
 \nu_1\nu_2 A^2&=\frac{D_N}{d\det M}
 <\frac{16\pi^4}{27}.
\end{align*}
Apply Lemma~\ref{lem:ritz} and the monotonicity of the means; taking
the positive square root in the second inequality gives the required
geometric-mean bound. Shared cell boundaries cause no exception,
because the coefficient criterion holds on each closed rectangle.
\end{proof}

\subsection{A corrected trial space near the equilateral triangle}\label{sec:equilateral}

A separate neighborhood is needed because the sharp inequalities become
equalities at $r=2$. None of the polynomial trial spaces used in the
compact region contains a nonconstant Neumann eigenfunction of the
equilateral triangle: a nonzero polynomial cannot satisfy
$-\Delta p=\mu p$ with $\mu>0$, as comparison of the highest-degree terms
shows. Hence every fixed pair leaves a strictly positive Ritz error
at equality. By continuity, and since only finitely many pairs are used,
these pairs cannot prove the sharp bounds for all shapes approaching
$r=2$. The present local construction instead starts from exact
equilateral eigenfunctions and retains equality there. The calculation
below explains why the uncorrected space fails and how corrections
built from cubic polynomials make both remainders positive.

For the local construction, let
\[
 h=r-2,\quad k=s,\quad
 D=\left\{\left(x,z\right):0<z<1/2,\ \left|x\right|<1/2-z\right\},
 \quad \lambda=\frac{16\pi^2}{9},\quad m=\frac38.
\]
$\left\langle f,g\right\rangle=\int_D fg\dd x\dd z$ is the real $L^2\left(D\right)$ inner product.
We write $\left\|f\right\|_{L^2\left(D\right)}^2=\left\langle f,f\right\rangle$.
The constants $\lambda$ and $m$ are respectively the first positive
eigenvalue at the unit-side equilateral triangle and the squared
$L^2\left(D\right)$ norm of each basis function below.
The standard basis from \cite[Section~4.1]{LS2009}, in these coordinates, is
\begin{align}
 u_1&=\sin\left(4\pi x/3\right)+2\sin\left(2\pi x/3\right)\cos\left(2\pi z\right),\label{eq:u1}\\
 u_2&=\cos\left(4\pi x/3\right)-2\cos\left(2\pi x/3\right)\cos\left(2\pi z\right).\label{eq:u2}
\end{align}
The physical equilateral coordinates are $\left(x,\sqrt3z\right)$. After dividing
mass and energy by their common Jacobian $\sqrt3$, the energy form is
\[
 \int_D\left(u_x^2+u_z^2/3\right)\dd x\dd z.
\]
The eigenvalue is $\lambda$, and $\left\langle u_i,u_j\right\rangle=m\delta_{ij}$,
where $\delta_{ij}$ is $1$ for $i=j$ and $0$ otherwise.
By the explicit Neumann spectrum of the unit-side equilateral triangle
\cite[Section~8.2 and equations~(10.1)--(10.3)]{McCartin2002},
$\lambda$ has multiplicity two. Thus the displayed
functions span its entire eigenspace \cite[Section~4.1]{LS2009}, and
the next eigenvalue is strictly larger. All required projection
coefficients are determined below by the exact mass integrals.

We allow the trial space to change to first order with the shape.
At $k=0$, changing $h$ stretches the height; changing $k$ to first
order moves the top vertex horizontally and hence shears the triangle.
The polynomials $p_i$ below supply the stretch corrections and $q_i$
the shear corrections. Their coefficients are guided by the
finite-dimensional minimization explained below. We use
the following centered polynomials:
\begin{align*}
 p_1={}&-\tfrac{97}{3}x-8xz+168xz^2+47x^3,\\
 p_2={}&-\tfrac{1007}{180}-\tfrac{13}{12}z+\tfrac{457}{2}z^2
 -301z^3+4x^2-36x^2z,\\
 q_1={}&-\tfrac{667}{180}+\tfrac{71}{4}z+\tfrac{127}{2}z^2
 -119z^3-\tfrac{13}{6}x^2-77x^2z,\\
 q_2={}&\tfrac{707}{36}x-\tfrac{11}{3}xz-73xz^2-30x^3.
\end{align*}
Set $U=\spanop\left\{u_1,u_2\right\}$. Its orthogonal projection is
explicitly
\[
 \Pi_Uv=\frac{\left\langle v,u_1\right\rangle}{m}u_1
       +\frac{\left\langle v,u_2\right\rangle}{m}u_2.
\]
A component in $U$ only changes the basis of the equilateral eigenspace
to first order. We remove it so that the corrections describe a change
of the space itself. The reference trial functions are
\begin{equation}\label{eq:trialnear}
 f_i=u_i+\frac h3\left(p_i-\Pi_Up_i\right)+k\left(q_i-\Pi_Uq_i\right),\,\,\, i=1,2.
\end{equation}
All six basic functions have mean zero. Moreover, $\Pi_Uf_i=u_i$.
Applying $\Pi_U$ to a vanishing combination of $f_1,f_2$ therefore
gives the same combination of the independent functions $u_1,u_2$.
This proves independence for every $\left(h,k\right)$.
On the physical triangle, the pair used in Lemma~\ref{lem:ritz} is
\[
 F_i\left(X,Y\right)
 =f_i\left(\frac{X-aY/b}{2},\frac{Y}{2b}\right),\qquad i=1,2.
\]
The underlying map $\left(x,z\right)\mapsto\left(2x+2az,2bz\right)$
is a bijection from $D$ to $T$ with constant Jacobian $4b>0$.
It preserves independence and zero mean.

For later denominator estimates, the reference mass matrix
$M_{ij}=\int_Df_if_j\dd x\dd z$ has a uniform positive lower bound.
Write $f_i=u_i+w_i$, where $w_i$ is orthogonal to $U$.
For any $c=\left(c_1,c_2\right)^t\in\mathbb R^2$, with Euclidean norm $\left|c\right|=\sqrt{c_1^2+c_2^2}$,
\[
 c^tMc=m\left(c_1^2+c_2^2\right)+\left\|c_1w_1+c_2w_2\right\|_{L^2\left(D\right)}^2
 \geq m\left|c\right|^2.
\]
The means of the two odd polynomials vanish by parity; those of the even
polynomials follow by substituting their coefficients in the moment formula
of Appendix~\ref{app:integrals}. The included verifier checks all six means.

The correction coefficients come from a finite-dimensional perturbation
calculation. Let $\mathcal V$ be the space obtained by subtracting the
mean and the $U$ projection from polynomials of degree at most three.
On this space the bilinear form
\[
 \mathcal B\left(w,v\right)=\int_D\left(w_xv_x+\tfrac13w_zv_z-\lambda wv\right)
                 \dd x\dd z
\]
is positive definite: the constants and the entire first positive
eigenspace have been removed, and the next eigenvalue is larger.
Define the linear functionals
\begin{align*}
 g_{p,i}\left(v\right)&=\frac43\int_D \left(u_i\right)_zv_z\dd x\dd z,\\
 g_{q,i}\left(v\right)&=\frac23\int_D
       \left(\left(u_i\right)_xv_z+\left(u_i\right)_zv_x\right)\dd x\dd z.
\end{align*}
These factors come directly from the deformed energy. For two reference
functions $v,w$, multiplying the energy after division by the Jacobian
by the fixed side-length factor $4$ gives
\[
 \int_D\left[v_xw_x+
 \frac{\left(v_z-av_x\right)\left(w_z-aw_x\right)}{d}\right]\dd x\dd z.
\]
Since $a=2k$ and $d=3+4h$ to first order, the first-order change from
the equilateral form is
\[
 -\frac{4h}{9}\int_Dv_zw_z\dd x\dd z
 -\frac{2k}{3}\int_D\left(v_xw_z+v_zw_x\right)\dd x\dd z.
\]
For $p,q\in\mathcal V$, a correction $hp/3$ to $u_i$ gives a second-order
Rayleigh variation whose part depending on $p$ is proportional to
$\mathcal B\left(p,p\right)-2g_{p,i}\left(p\right)$.
The correction $kq$ similarly gives
$\mathcal B\left(q,q\right)-2g_{q,i}\left(q\right)$.
This explains both the signs and the factors in the displayed functionals;
terms depending only on the geometry do not affect their minimizers.
For either functional $g$, minimizing $\mathcal B\left(w,w\right)-2g\left(w\right)$ over
$\mathcal V$ amounts to solving $\mathcal B\left(w,v\right)=g\left(v\right)$ for all
$v\in\mathcal V$. In a basis $v_1,\ldots,v_j$ of this space,
the unknown coefficients $c_1,\ldots,c_j$ solve
$\sum_{\ell=1}^j\mathcal B\left(v_\ell,v_a\right)c_\ell=g\left(v_a\right)$ for $1\leq a\leq j$.

Polynomials of degree at most three make this a small linear system
and yield corrections that pass the exact bounds below. No minimal
degree or optimality of the coefficients is claimed. Numerical
quadrature supplied candidate coefficients. We selected
nearby simple integers in the variables $x$ and $z-1/6$, then removed
the mean exactly to obtain the four displayed polynomials. Stretch
preserves parity in $x$ and shear reverses it, explaining their parity
pattern. The finite neighborhood is certified below using
the exact integrals of these chosen functions.

To compute the mass and energy matrices, use the ordered basis
\[
e=\left(u_1,u_2,p_1,p_2,q_1,q_2\right)
\]
and, for $1\leq i,j\leq6$, form
\begin{equation}\label{eq:rawgrams}
\begin{aligned}
 M^0_{ij}&=\int_D e_i e_j\dd x\dd z,&
 X^0_{ij}&=\int_D\left(e_i\right)_x\left(e_j\right)_x\dd x\dd z,\\
 Y^0_{ij}&=\int_D\left(e_i\right)_z\left(e_j\right)_z\dd x\dd z,&
 Z^0_{ij}&=\int_D\left(e_i\right)_x\left(e_j\right)_z\dd x\dd z.
\end{aligned}
\end{equation}
The matrix $Z^0$ is not symmetric. All entries lie in $\Q\left[\pi,\pi^{-1},\sqrt3\right]$, the set of finite rational linear combinations of $\pi^n\left(\sqrt3\right)^\epsilon$ with $n\in\mathbb Z$ and $\epsilon\in\left\{0,1\right\}$. The integration formulas in Appendix~\ref{app:integrals} specify every entry. The supplementary verifier directly integrates the six functions and records all entries in \texttt{certificate.json}. For the two eigenfunctions, the integrals in \cite[Section~4.1]{LS2009} give
\begin{align*}
 X^0_{11}&=m\left(\lambda/2+27/4\right),& X^0_{22}&=m\left(\lambda/2-27/4\right),\\
 Y^0_{ii}&=3\left(m\lambda-X^0_{ii}\right),&
 Z^0_{12}&=81\sqrt3/32+\pi,\quad Z^0_{21}=81\sqrt3/32-\pi.
\end{align*}
For this leading $2\times2$ block, the off-diagonal entries of
$M^0,X^0,Y^0$ and the diagonal entries of $Z^0$ vanish. In particular,
the mass block is $mI$, where $I$ is the $2\times2$ identity matrix.

Put
\[
 a_1=M^0_{13}/m,\quad a_2=M^0_{24}/m,\quad
 b_1=M^0_{25}/m,\quad b_2=M^0_{16}/m.
\]
For example, $p_1$ is odd in $x$, so its inner product with the
even function $u_2$ vanishes and $\Pi_Up_1=a_1u_1$. The other
three projections follow in the same way. Substituting them into
\eqref{eq:trialnear} gives the coefficient matrix
\begin{equation}\label{eq:coefficientmatrix}
 B=\begin{pmatrix}
 1-ha_1/3&-kb_1&h/3&0&k&0\\
 -kb_2&1-ha_2/3&0&h/3&0&k
 \end{pmatrix}.
\end{equation}
The two rows of $B$ are the coefficients of $f_1,f_2$ in the
ordered list $e$. Expanding $f_if_j$ inside its integral gives
$M=BM^0B^t$. Differentiating the same linear combinations gives
$X=BX^0B^t$, $Y=BY^0B^t$, and $Z=BZ^0B^t$. These are precisely
the reference mass and derivative matrices of the pair
\eqref{eq:trialnear}.

For the physical pair $F_1,F_2$ defined above, the energy matrix
after division by the common Jacobian is
\begin{equation}\label{eq:nearN}
 K=\frac{N}{4d},\,\,\, N=\left(a^2+d\right)X-a\left(Z+Z^t\right)+Y,
\end{equation}
where
\[
 a=\left(2+h\right)k,\quad d=\left(h^2+4h+3\right)\left(1-k^2\right),\quad a^2+d=h^2+4h+3+k^2.
\]
Indeed, the chain rule gives
\[
 \left(F_i\right)_X=\frac{\left(f_i\right)_x}{2},\qquad
 \left(F_i\right)_Y=\frac{\left(f_i\right)_z-a\left(f_i\right)_x}{2b}.
\]
Thus $M^{\mathrm{phys}}=4bM$ and $K^{\mathrm{phys}}=N/b$.
Dividing both matrices by $4b$ gives $K=N/\left(4d\right)$ in \eqref{eq:nearN}. Lemma~\ref{lem:ritz} and $L=2\left(h+3\right)$ reduce the two desired inequalities to the following targets (the symbols $P_H,P_G$ are used here with the present reference scale):
\begin{align}
 P_H\left(h,k\right)&=9\lambda d\tr\left(\adj\left(N\right)M\right)-2\left(h+3\right)^2\det N\geq0,\label{eq:nearH}\\
 P_G\left(h,k\right)&=3\lambda^2d\det M-\det N\geq0.\label{eq:nearG}
\end{align}

To see the problem the corrections must solve, temporarily use
$u_1,u_2$ alone and mark the resulting matrices and targets with a tilde. Set
$\xi=h^2+4h+k^2$, so that $a^2+d=3+\xi$.
The displayed raw entries give
\begin{align*}
  &\det\widetilde N
=m^2\left[\left(3\lambda+\frac{\lambda\xi}{2}\right)^2
       -\left(\frac{27\xi}{4}\right)^2-\frac{2187a^2}{4}\right],\\
 &\tr\left(\adj\left(\widetilde N\right)\widetilde M\right)=m^2\left(6\lambda+\lambda\xi\right),
 \,\,\, \widetilde M=mI.
\end{align*}
For example, the diagonal entries of $\widetilde N/m$ are
$3\lambda+\xi\left(\lambda/2\pm27/4\right)$, and its off-diagonal entry is
$-27\sqrt3\,a/2$. Expanding through total degree two gives
\begin{align*}
 \frac{\det\widetilde N}{m^2}
 &=9\lambda^2+12\lambda^2h+\left(7\lambda^2-729\right)h^2
       +\left(3\lambda^2-2187\right)k^2+O\left(\left(\left|h\right|+\left|k\right|\right)^3\right),\\
 \widetilde P_H
 &=m^2\left(13122-63\lambda^2\right)\left(h^2+3k^2\right)+O\left(\left(\left|h\right|+\left|k\right|\right)^3\right),\\
 \widetilde P_G
 &=m^2\left(729-4\lambda^2\right)\left(h^2+3k^2\right)+O\left(\left(\left|h\right|+\left|k\right|\right)^3\right).
\end{align*}
Here $O\left(\left(\left|h\right|+\left|k\right|\right)^3\right)$ denotes a polynomial whose monomials all have
total degree at least three. Since $\pi>3$, we have $\lambda>16$,
and both displayed quadratic coefficients are negative. The uncorrected
space therefore fails to prove either inequality for sufficiently small
nonzero admissible perturbations. Orthogonally projected cubic
corrections preserve the zero constant and linear terms while improving
the quadratic terms; the exact positive bounds below verify the required
improvement on an entire neighborhood.

We next factor out the quadratic zero at equality. For $19/10\leq r\leq2$ set $\rho=2-r$, $k=\rho\tau$. Then
\begin{equation}\label{eq:nearrectangle}
 0\leq\rho\leq1/10,\,\,\,0\leq\tau\leq1.
\end{equation}
When $\rho=0$, all values of $\tau$ represent the same equilateral triangle.

All values and derivatives in the next calculation are taken at
$\left(h,k\right)=\left(0,0\right)$. Orthogonality of the corrections gives
\[
 M=mI,\,\,\, M_h=M_k=0.
\]
The eigenfunction equation and Neumann boundary conditions imply, by
integration by parts,
\[
 \int_D\left(\left(u_i\right)_x w_x+\tfrac13\left(u_i\right)_z w_z\right)\dd x\dd z
 =\lambda\int_Du_iw\dd x\dd z=0
 \quad\text{for }w\perp U.
\]
Therefore the first derivatives of $3X+Y$ vanish. From the geometric
coefficients in $N$ we obtain
\[
 N=3\lambda mI,\,\,\, N_h=4X^0_{UU},\,\,\,
 N_k=-2\left(Z^0_{UU}+\left(Z^0_{UU}\right)^t\right).
\]
Here a subscript $UU$ denotes the leading two-by-two block corresponding to $u_1,u_2$. Since $\operatorname{tr}X^0_{UU}=m\lambda$ and
$\operatorname{tr}Z^0_{UU}=0$, writing
$D_N=\det N$ and $\mathcal T_N=\tr\left(\adj\left(N\right)M\right)$ gives
\[
\begin{array}{c|ccc}
 &\text{value}&\partial_h&\partial_k\\\hline
 d&3&4&0\\
 \det M&m^2&0&0\\
 D_N&9\lambda^2m^2&12\lambda^2m^2&0\\
 \mathcal T_N&6\lambda m^2&4\lambda m^2&0
\end{array}
\]
Substitution gives $P_H\left(0,0\right)=P_G\left(0,0\right)=0$ and
$\partial_hP_H=\partial_kP_H=\partial_hP_G=\partial_kP_G=0$.
For example,
\[
 \partial_hP_H
 =9\lambda\left(4\cdot6\lambda m^2+3\cdot4\lambda m^2\right)
 -2\left(6\cdot9\lambda^2m^2+9\cdot12\lambda^2m^2\right)=0,
\]
while $\partial_hP_G=12\lambda^2m^2-12\lambda^2m^2=0$.
Hence no target contains a monomial of total degree less than two.

There is also a parity check. Reflecting $x$ and changing $k$ to $-k$
changes the sign of the first trial function and leaves the second
unchanged. Consequently the diagonal entries of $M,N$ are even in
$k$, while their off-diagonal entries are odd. Their determinants and
$\tr\left(\adj\left(N\right)M\right)$ are therefore even in $k$.
Since $d$ is also even, so are both target polynomials.
Every monomial $c_{ij}h^ik^j$ therefore has $i+j\geq2$ and even $j$;
after $h=-\rho$, $k=\rho\tau$ it becomes
$\rho^2\left(-1\right)^ic_{ij}\rho^{i+j-2}\tau^j$.
This proves the asserted factorization algebraically before any
interval bound is taken.

\begin{lemma}\label{lem:nearcertificate}
The polynomials in \eqref{eq:nearH}--\eqref{eq:nearG} satisfy the exact identities
\begin{equation*}
 P_H\left(-\rho,\rho\tau\right)=\rho^2Q_H\left(\rho,\tau\right),\,\,\,
 P_G\left(-\rho,\rho\tau\right)=\rho^2Q_G\left(\rho,\tau\right).
\end{equation*}
The polynomials $Q_H,Q_G$ have, respectively, 33 and 23 nonzero monomials, and bidegrees at most $\left(8,8\right)$ and $\left(6,8\right)$. On \eqref{eq:nearrectangle},
\begin{equation}\label{eq:positivenear}
 Q_H>138,\,\,\, Q_G>45.
\end{equation}
\end{lemma}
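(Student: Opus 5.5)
The proof is an exact symbolic computation followed by a Bernstein positivity test. The main difficulty is that the entries lie in $\Q[\pi,\pi^{-1},\sqrt3]$ rather than $\Q$.

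\emph{Step 1: assemble the targets exactly.} Using the integration formulas of Appendix~\ref{app:integrals}, compute the six-by-six raw Gram matrices $M^0,X^0,Y^0,Z^0$ of \eqref{eq:rawgrams}. Each entry is an exact element of $\Q[\pi,\pi^{-1},\sqrt3]$. From these, read off $a_1,a_2,b_1,b_2$ and form $B$ from \eqref{eq:coefficientmatrix}. Then compute $M=BM^0B^t$, $X,Y,Z$, and $N$ from \eqref{eq:nearN} as polynomials in $(h,k)$ with coefficients in this ring. Expanding $P_H,P_G$ from \eqref{eq:nearH}--\eqref{eq:nearG} with $\lambda=16\pi^2/9$ gives polynomials in $h,k$ over the same ring.

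\emph{Step 2: factor out $\rho^2$.} The preceding discussion already shows that every monomial $c_{ij}h^ik^j$ has $i+j\ge2$ and $j$ even. So after substituting $h=-\rho$, $k=\rho\tau$, division by $\rho^2$ is exact and gives
\[
Q(\rho,\tau)=\sum(-1)^ic_{ij}\rho^{i+j-2}\tau^j .
\]
Counting the nonzero monomials and degrees confirms the stated sizes, 33 and 23 terms with bidegrees $(8,8)$ and $(6,8)$. The computation should also verify, not assume, that the low-order coefficients $c_{00},c_{10},c_{01},c_{11}$ and the odd-$j$ coefficients vanish as identities in $\pi$ and $\sqrt3$. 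Using the ring structure makes this a finite check of rational coefficients of each basis element $\pi^n(\sqrt3)^\epsilon$.

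\emph{Step 3: prove positivity.} Each coefficient of $Q_H,Q_G$ is a finite combination $\sum r_{n\epsilon}\pi^n(\sqrt3)^\epsilon$. To obtain rational polynomials, I will use the rational enclosures $\pi_-<\pi<\pi_+$ and $s_-<\sqrt3<s_+$ from Appendix~\ref{app:constants}. Each term is replaced by its rational lower bound, with the choice of endpoint depending on the sign of $r_{n\epsilon}$ and the parity of $n$; $\pi^{-1}$ is treated with reversed endpoints. This gives rational lower coefficients.

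There is one complication. A lower bound on each coefficient only bounds $Q$ from below when the monomial it multiplies is nonnegative. On $[0,1/10]\times[0,1]$ every monomial $\rho^i\tau^j$ is nonnegative, so lowering every coefficient does lower $Q$ pointwise. The result is a rational polynomial $\widehat Q\le Q$ on the rectangle.

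Finally, apply Lemma~\ref{lem:bernstein} to $\widehat Q_H-138$ and $\widehat Q_G-45$ on $[0,1/10]\times[0,1]$. All Bernstein coefficients should come out positive. If not, bisect in $\rho$ or $\tau$ and apply the lemma on each piece.

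The step I expect to be hardest is this last one. After $\rho^2$ is removed, the value $Q(0,\tau)$ is the quadratic form from the corrected trial space. Its margin over the uncorrected negative coefficients, $13122-63\lambda^2$ and $729-4\lambda^2$, may be modest. The coefficient-by-coefficient enclosure of $\pi$ and $\sqrt3$ can also cost accuracy through cancellations among large coefficients. If so, I would first tighten the enclosures, using many digits of $\pi$ as in $\pi_-$. If Bernstein coefficients still fail, I would subdivide the $\rho$ interval. The bounds 138 and 45 are presumably the rounded certified minima of this process.
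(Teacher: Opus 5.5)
Your proposal is correct and follows the paper's proof. The $\rho^2$ factorization comes from the vanishing constant and linear terms together with evenness in $k$, the coefficients of $Q_H,Q_G$ are enclosed rationally via the $10^{-50}$ bounds for $\pi$ and $\sqrt3$, and a single Bernstein test on $[0,1/10]\times[0,1]$ (no subdivision turned out to be needed) gives minima above $138.2$ and $45.35$; your lowering of monomial coefficients is equivalent to the paper's lowering of Bernstein coefficients, since with the corner at the origin all weights $10^{-p}\binom ip\binom jq/\bigl(\binom np\binom mq\bigr)$ are nonnegative (minor slip: the endpoint choice for $\pi^n$ depends on the sign of $n$, not its parity, as both enclosure endpoints are positive).
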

\begin{proof}[Exact verification]
The factorization was proved above. It remains to bound the coefficients
of $Q_H$ and $Q_G$.

The supplementary \texttt{certificate.json} records every exact
coefficient and a rational enclosure with common denominator $10^9$.
The verifier reconstructs them from the six explicit functions,
the integral formulas in Appendix~\ref{app:integrals}, and
\eqref{eq:coefficientmatrix}--\eqref{eq:nearG}. The enclosures follow from
\begin{align*}
 \pi_0&=3.14159265358979323846264338327950288419716939937510,\\
 s_0&=1.73205080756887729352744634150587236694280525381038,\\
 \pi_0&<\pi<\pi_0+10^{-50},\,\,\, s_0<\sqrt3<s_0+10^{-50}.
\end{align*}
These are rational endpoints, certified as in Appendix~\ref{app:constants}. Negative powers of $\pi$ are enclosed by reversing the appropriate endpoint order.

If $Q\left(\rho,\tau\right)=\sum c_{pq}\rho^p\tau^q$ has bidegree $\left(n,m\right)$, its Bernstein coefficients on \eqref{eq:nearrectangle} are
\begin{equation*}
 \beta_{ij}=\sum_{p\leq i,\,q\leq j}c_{pq}\,10^{-p}
 \frac{\binom ip}{\binom np}\frac{\binom jq}{\binom mq}.
\end{equation*}
All weights are nonnegative, so coefficient lower endpoints give valid lower bounds for every $\beta_{ij}$. All 81 lower bounds for $Q_H$ exceed $138$, and all 63 for $Q_G$ exceed $45$. More precisely, their minima exceed $138.205589695$ and $45.352821061$, respectively; the actual recorded and compared quantities are rational numbers.

The supplementary report also gives the individual Bernstein bounds
and their row minima. The verifier reconstructs all $144$ raw integrals,
checks the factorization and coefficient enclosures, and verifies all
$144$ local Bernstein bounds.
Lemma~\ref{lem:bernstein} proves \eqref{eq:positivenear}.
\end{proof}

These bounds explain the upper cutoff $19/10$: the certified rectangle
$0\leq\rho\leq1/10$, $0\leq\tau\leq1$ corresponds exactly to
$19/10\leq r\leq2$. Factoring out $\rho^2$ isolates the zero at equality
and permits strictly positive bounds for the remaining factors.
The certified width $1/10$ is not asserted to be maximal. A wider
neighborhood would require renewed positivity checks; a narrower one
would require extending the middle-region coverage. In particular,
replacing $19/10$ by $2$ would reduce the local region to the equilateral
triangle alone, while the existing compact-region certificates still
stop at $19/10$, leaving $19/10<r<2$ without verified coverage.

\begin{proposition}\label{prop:near}
Theorem~\ref{thm:main} holds on $19/10\leq r\leq2$. Both inequalities are strict unless $\left(r,s\right)=\left(2,0\right)$.
\end{proposition}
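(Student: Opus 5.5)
The plan is to combine the corrected trial pair \eqref{eq:trialnear} with Lemma~\ref{lem:ritz} and the factorization of Lemma~\ref{lem:nearcertificate}. First I handle the equilateral point $(r,s)=(2,0)$ itself. There $\mu_2=\mu_3$ equals the first positive eigenvalue, which for side length $2$ is $\lambda/4=4\pi^2/9$. With $L=6$ and $A=\sqrt3$ this gives
\[
 H L^2=\frac{4\pi^2}{9}\cdot36=16\pi^2,\qquad
 G A=\frac{4\pi^2}{9}\sqrt3=\frac{4\pi^2}{3\sqrt3},
\]
so equality holds in both inequalities.

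Next take $19/10\leq r<2$, i.e.\ $0<\rho\leq1/10$, and set $\tau=s/\rho\in[0,1]$. Then $(h,k)=(-\rho,\rho\tau)$, and these values match the quantities in \eqref{eq:nearN}: $a=(2+h)k=rs$ and $d=(r^2-1)(1-s^2)$ by Lemma~\ref{lem:coordinates}. The functions $F_1,F_2$ are independent with mean zero, as shown above. The reference mass matrix satisfies $c^tMc\geq m|c|^2$, so $M$ is positive definite and $\det M>0$. By Lemma~\ref{lem:ritz} the energy matrix is positive definite, hence $N$ is too. This gives $\det N>0$ and, as in the proof of Proposition~\ref{prop:compact}, $\tr(\adj(N)M)=\det N\,\tr(N^{-1}M)>0$. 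Also $d>0$.

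Lemma~\ref{lem:nearcertificate} gives $P_H=\rho^2Q_H>138\rho^2>0$ and $P_G=\rho^2Q_G>45\rho^2>0$. I then translate these back through \eqref{eq:ritz}, using $K=N/(4d)$ and reference area $|D|=1/4$. The target is $H(\nu_1,\nu_2)=8d\det N/\tr(\adj(N)M)$ after rescaling, together with the analogous product formula. I will check that the normalized deficits equal
\[
 1-\frac{H(\nu_1,\nu_2)L^2}{16\pi^2}=\frac{P_H}{9\lambda d\,\tr(\adj(N)M)},\qquad
 1-\frac{27\nu_1\nu_2A^2}{16\pi^4}=\frac{P_G}{3\lambda^2d\det M}.
\]
The exact constants must be tracked carefully here: $\lambda=16\pi^2/9$, the scaling by $2$ in the map, and the Jacobian $4b$. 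Since the denominators are positive, both deficits are strictly positive. By the monotonicity of $H$ and $G$ and $\mu_2\leq\nu_1$, $\mu_3\leq\nu_2$, both inequalities hold strictly; for the geometric mean I take square roots.

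The main obstacle is this bookkeeping step: confirming that \eqref{eq:nearH}--\eqref{eq:nearG} are exactly the cleared-denominator forms of the targets with correct positive multipliers. The positivity itself is already certified by Lemma~\ref{lem:nearcertificate}. Finally, when $\rho=0$ the only admissible shape is $s=0$, which is the equilateral case handled first. So strictness fails only at $(2,0)$.
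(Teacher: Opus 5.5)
Your proposal is correct and follows the paper's own argument. At $(r,s)=(2,0)$ you get equality from $\mu_2=\mu_3=4\pi^2/9$, $L=6$, $A=\sqrt3$. For $\rho>0$ you combine $P_H=\rho^2Q_H>0$ and $P_G=\rho^2Q_G>0$ from Lemma~\ref{lem:nearcertificate} with positive denominators ($c^tMc\geq m|c|^2$ and positive energy) and Lemma~\ref{lem:ritz}.

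One slip to fix: the intermediate formula should read $H(\nu_1,\nu_2)=\det N/\bigl(2d\,\tr(\adj(N)M)\bigr)$, since $\tr(K^{-1}M)=4d\,\tr(N^{-1}M)$, and the area of $D$ plays no role. Your stated deficit identities are the ones that follow from this corrected formula, so they are right.
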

\begin{proof}
If $r<2$, then $\rho>0$, and Lemma~\ref{lem:nearcertificate} makes both target polynomials strictly positive. All denominators in the reduction are positive, by the lower bound $c^tMc\geq m\left|c\right|^2$ and positive energy on the mean-zero trial space. Apply Lemma~\ref{lem:ritz}. At $r=2$, the trial functions are the exact equilateral eigenfunctions. The normalized triangle has $L=6$, $A=\sqrt3$, and $\mu_2=\mu_3=4\pi^2/9$, giving equality in both assertions.
\end{proof}

\subsection{Completion of the proof and stability}\label{sec:completion}
\begin{proof}[Proof of Theorem~\ref{thm:main}]
The three parameter regions in Propositions~\ref{prop:cap}, \ref{prop:compact}, and \ref{prop:near} cover all of \eqref{eq:shape}, with overlap at their interfaces. All non-equilateral triangles satisfy strict inequalities; the equilateral triangle gives equality. The normalizations are similarities, and both functionals in the theorem are similarity invariant.
\end{proof}

\begin{proof}[Proof of Corollary~\ref{cor:stability}]
In the normalized coordinates, put $\rho=2-r$. The side lengths are $2,r+s,r-s$, so
\begin{equation*}
 \mathcal A\left(T\right)=\frac{2\rho^2+6s^2}{4\left(r+1\right)^2}.
\end{equation*}
Since $0\leq s\leq\rho=2-r$ and $1<r\leq2$, we have
\[
 \mathcal A\left(T\right)\leq\frac{2\rho^2}{\left(r+1\right)^2}\leq\frac12,
 \,\,\, \mathcal A\left(T\right)\leq2\rho^2.
\]

Near the equilateral triangle, abbreviate
$D_N=\det N$ and $\mathcal T_N=\tr\left(\adj\left(N\right)M\right)$.
Since $K=N/\left(4d\right)$, the inverse trace and determinant are
\[
 \tr\left(K^{-1}M\right)
 =4d\,\tr\left(N^{-1}M\right)
 =\frac{4d\mathcal T_N}{D_N},\,\,\,
 \det K=\frac{D_N}{16d^2}.
\]
Lemma~\ref{lem:ritz} therefore gives
\[
 H\left(\nu_1,\nu_2\right)=\frac{D_N}{2d\mathcal T_N},\,\,\,
 \nu_1\nu_2=\frac{D_N}{16d^2\det M}.
\]
Using $L=2\left(h+3\right)$, $A^2=d$, $16\pi^2=9\lambda$ and
$256\pi^4/27=3\lambda^2$, we obtain
\begin{align*}
 1-\frac{H\left(\nu_1,\nu_2\right)L^2}{16\pi^2}
 &=1-\frac{2\left(h+3\right)^2D_N}{9\lambda d\mathcal T_N}\\
 &=\frac{9\lambda d\mathcal T_N-2\left(h+3\right)^2D_N}
         {9\lambda d\mathcal T_N}
 =\frac{P_H}{9\lambda d\mathcal T_N},\\
 1-\frac{27\nu_1\nu_2 A^2}{16\pi^4}
 &=1-\frac{27D_N}{256\pi^4d\det M}\\
 &=\frac{3\lambda^2d\det M-D_N}{3\lambda^2d\det M}
 =\frac{P_G}{3\lambda^2d\det M}.
\end{align*}
These are deficits for the trial eigenvalues. The inequalities
$\mu_2\leq\nu_1$, $\mu_3\leq\nu_2$ imply that the corresponding
actual spectral deficits are at least as large, since the means are
increasing in each argument.
We now bound the denominators explicitly. On the local reference
triangle define
\[
 \mathcal E\left(v\right)=\int_D\left(v_x^2+\tfrac13v_z^2\right)\dd x\dd z.
\]
Substituting the four displayed polynomials into the moment formula
of Appendix~\ref{app:integrals} gives the following rational values:
\[
\begin{array}{c|cccc}
 v&p_1&p_2&q_1&q_2\\[2pt]\hline\noalign{\vskip6pt}
 \left\|v\right\|_{L^2\left(D\right)}^2&\dfrac{6724951}{967680}&\dfrac{101635427}{14515200}
 &\dfrac{1220729}{518400}&\dfrac{4174799}{1741824}\\[7pt]
 \mathcal E\left(v\right)&\dfrac{368383}{2880}&\dfrac{389177}{2880}
 &\dfrac{12239}{270}&\dfrac{228887}{5184}
\end{array}
\]
Thus $\left\|v\right\|_{L^2\left(D\right)}^2<8$ and $\mathcal E\left(v\right)<144$ for all four
polynomials. The verifier checks these bounds independently.
Removing the projection onto $U$ does not increase either quantity: the eigenfunction
equation makes this projection orthogonal for the energy form as well
as the mass form. Writing $f_i=u_i+w_i$, the triangle inequality for
the two norms and $\left|h\right|/3\leq1/30$, $\left|k\right|\leq1/10$ give
\[
 M_{ii}\leq\frac38+8\left(\frac1{30}+\frac1{10}\right)^2<1,
 \,\,\,
 \mathcal E\left(f_i\right)<\frac{27}{4}
       +144\left(\frac1{30}+\frac1{10}\right)^2<10.
\]
Here $\lambda<18$, and both cross terms with $u_i$ vanish by
orthogonality. In this region $a^2+d\leq3$ and $\left|a\right|\leq1/5$.
Using $2\left|v_xv_z\right|\leq\sqrt3\left(v_x^2+v_z^2/3\right)$ in the definition
of $N$ gives $N_{ii}<4\mathcal E\left(f_i\right)$. Hence
\[
 \tr M<2,\,\,\, \tr N<80,\,\,\,
 \mathcal T_N\leq\left(\tr N\right)\left(\tr M\right)<160,\,\,\, \det M<1.
\]
The trace inequality uses positivity of $M,N$ and
$\tr\adj\left(N\right)=\tr N$ for two-by-two matrices. Since $d\leq3$,
the local denominators are less than $77760$ and $2916$.
Lemma~\ref{lem:nearcertificate} therefore bounds the deficits below
by $138\rho^2/77760$ and $45\rho^2/2916$.
With $\mathcal A\left(T\right)\leq2\rho^2$, these imply the claimed constants
throughout the local region, including equality at $\rho=0$.

On every compact-region cell, the verifier also checks
\[
 \tr M<\frac16,\,\,\, \tr\left(X+Y\right)<1.
\]
Since $r^2+s^2-1\leq3$ and $0\leq rs\leq1$, Cauchy's inequality
gives $\tr N\leq4\tr\left(X+Y\right)<4$. Thus $\mathcal T_N<2/3$ and
$\det M\leq\left(\tr M\right)^2/4<1/144$.
The denominators of the compact-region deficits, namely
$2\pi^2d\mathcal T_N$ and $16\pi^4d\det M$, are less than
$40$ and $100/3$, using $d\leq3$ and $\pi^2<10$.
The certified polynomial bounds give deficits greater than
\[
 \frac{23}{40\cdot10^6},\,\,\,
 \frac{3\cdot1220}{100\cdot10^6}.
\]
Since $\mathcal A\left(T\right)\leq1/2$, these exceed the required bounds.

Finally, Proposition~\ref{prop:cap} and $\pi>157/50>3$ give
deficits greater than $149/24649$ and $2/5$ near degeneracy.
For example, the harmonic deficit exceeds
$1-\left(784/5\right)/\left(16\left(157/50\right)^2\right)=149/24649$.
These gaps also imply the stated estimates. Thus
$c_H=10^{-6}$ and $c_G=10^{-5}$ work in all three regions.
\end{proof}

\section{Reproducibility and supplementary material}\label{sec:reproduce}

The supplement contains \texttt{verify.py} and its output \texttt{certificate.json}.
A file guide and
SHA-256 checksums identify the supplied files.
The program includes all forty rectangle/vector records
and the four correction polynomials. It reconstructs the raw integrals,
target polynomials and bounds from these fixed inputs, without reading
the supplied output file or using an eigensolver. The data and program
are part of the computer-assisted proof and should accompany this paper.

With Python~3.8 or later and no additional packages, run
\begin{center}\texttt{python verify.py --report checked.json}\end{center}
from the supplementary directory. Assertions must remain enabled;
the program rejects execution with \texttt{-O}. A successful run ends
with \texttt{ALL STANDALONE EXACT CHECKS PASSED.}
The optional report stores exact rational values as strings and can be
compared with \texttt{certificate.json}; no intermediate numerical
rounding enters the proof. The program was tested with CPython~3.12.14.

The checks cover the forty-cell partition, 2400 compact-region
Bernstein coefficients, 144 raw Gram entries, six zero means,
the quadratic factors, 81 and 63 local Bernstein bounds, and the
norm estimates used for stability. Floating-point calculations only
selected candidate trial functions and subdivisions. The analytic
reductions and exact verification establish the inequalities;
this is a computer-assisted proof, not a proof-assistant formalization.
\par\medskip
\noindent\textbf{Use of AI tools.}
OpenAI Codex assisted with preparing and revising the mathematical
exposition and verification code, including expanding calculations
and checking the finite certificate data.
The authors take full responsibility for the content of this article,
including the mathematical arguments and verification code.
\par\medskip
\noindent\textbf{Data availability.}
The exact certificate data and verification code are included in the
computational supplement submitted with this article.
\par\medskip
\noindent\textbf{Conflict of interest.}
The authors declare that they have no conflicts of interest.

\appendix
\section{Exact integration formulas}\label{app:integrals}

For nonnegative integers $p,q$, the moments on $D$ are
\[
 \int_Dx^pz^q\dd x\dd z=
 \begin{cases}
 \displaystyle\frac{p!q!}{2^{p+q+1}\left(p+q+2\right)!},&p\text{ even},\\
 0,&p\text{ odd}.
 \end{cases}
\]
The remaining Gram integrals reduce to
$\int_Dx^pz^q e^{i\pi\left(Ax+Bz\right)}\dd x\dd z$ by expanding sine and
cosine as exponentials. Here $A,B$ are frequencies and $i^2=-1$.
For a nonnegative integer $n$, define
\[
 I_n\left(c\right)=\int_0^{1/2}z^n e^{i\pi cz}\dd z.
\]
For $c=0$, $I_n\left(0\right)=2^{-n-1}/\left(n+1\right)$. For $c\ne0$, integration by parts gives
\[
 I_0\left(c\right)=\frac{e^{i\pi c/2}-1}{i\pi c},\qquad
 I_n\left(c\right)=\frac{2^{-n}e^{i\pi c/2}-nI_{n-1}\left(c\right)}{i\pi c}
 \quad\left(n\geq1\right).
\]
Writing
\[
 W_{n,q}\left(c\right)=\sum_{j=0}^n\binom nj2^{j-n}\left(-1\right)^j I_{q+j}\left(c\right)
 =\int_0^{1/2}z^q\left(1/2-z\right)^n e^{i\pi cz}\dd z,
\]
we obtain, for $A\ne0$,
\begin{align*}
 &\int_Dx^pz^q e^{i\pi\left(Ax+Bz\right)}\dd x\dd z\\
 &\quad=\sum_{j=0}^p\frac{\left(-1\right)^jp!}{\left(p-j\right)!\left(i\pi A\right)^{j+1}}
 \left[e^{i\pi A/2}W_{p-j,q}\left(B-A\right)
       -\left(-1\right)^{p-j}e^{-i\pi A/2}W_{p-j,q}\left(B+A\right)\right].
\end{align*}
For $A=0$ the answer is zero if $p$ is odd, and otherwise equals
$2W_{p+1,q}\left(B\right)/\left(p+1\right)$. Thus no limiting numerical operation is needed.
For the frequencies in \eqref{eq:u1} and \eqref{eq:u2}, all endpoint
phases are sixth roots of unity. The results lie in
$\Q\left[\pi,\pi^{-1},\sqrt3,i\right]$ and are real after conjugate terms are
combined. These finite recurrences determine every entry of
\eqref{eq:rawgrams} and are implemented in the supplementary verifier.

\section{Rational enclosures for constants}\label{app:constants}

Let
\[
 S_N\left(q\right)=\sum_{j=0}^{N-1}\frac{\left(-1\right)^j}{\left(2j+1\right)q^{2j+1}}.
\]
The alternating-series bounds imply $S_{2j}\left(q\right)<\arctan\left(1/q\right)<S_{2j+1}\left(q\right)$ for $q>1$. Machin's identity gives
\[
 \pi=16\arctan\left(1/5\right)-4\arctan\left(1/239\right).
\]

For a direct check of this identity, if $\alpha=\arctan\left(1/5\right)$ then
$\tan\left(2\alpha\right)=5/12$ and $\tan\left(4\alpha\right)=120/119$.
The tangent subtraction formula gives
$\tan\left(4\alpha-\arctan\left(1/239\right)\right)=1$.
The angle lies in $\left(0,\pi/2\right)$, so it equals $\pi/4$.

For the compact region, rational comparison proves
\[
 \pi_-<16S_{30}\left(5\right)-4S_{31}\left(239\right)<\pi.
\]
For the equilateral certificate, exact rational comparisons give
\[
 \pi_0<16S_{50}\left(5\right)-4S_{51}\left(239\right)
 <\pi<16S_{51}\left(5\right)-4S_{50}\left(239\right)<\pi_0+10^{-50}.
\]
The square-root bounds follow from $s_0^2<3<\left(s_0+10^{-50}\right)^2$. Thus all transcendental enclosures used in the proof reduce to explicitly specified rational comparisons.

\section{The compact-region partition}\label{app:rectangles}
Table~\ref{tab:rectangles} lists the complete closed partition in the
order of the \texttt{CELLS} records in the supplementary \texttt{verify.py}.
Each pair of vectors is fixed throughout its cell, including the boundary.
\begin{table}[ht]
\centering
\caption{The forty closed rectangles in $\left(r,\tau\right)$.}\label{tab:rectangles}
\footnotesize
\renewcommand{\arraystretch}{1.2}
\begin{tabular}{rcccc@{\hspace{2.5em}}rcccc}
\toprule Cell & $r_0$ & $r_1$ & $\tau_0$ & $\tau_1$ & Cell & $r_0$ & $r_1$ & $\tau_0$ & $\tau_1$ \\
\midrule
1 & $\frac{11}{10}$ & $\frac{13}{10}$ & $0$ & $\frac{1}{4}$ & 21 & $\frac{9}{5}$ & $\frac{37}{20}$ & $\frac{1}{8}$ & $\frac{1}{4}$ \\
2 & $\frac{11}{10}$ & $\frac{13}{10}$ & $\frac{1}{4}$ & $\frac{1}{2}$ & 22 & $\frac{37}{20}$ & $\frac{19}{10}$ & $\frac{1}{8}$ & $\frac{1}{4}$ \\
3 & $\frac{13}{10}$ & $\frac{3}{2}$ & $0$ & $\frac{1}{4}$ & 23 & $\frac{17}{10}$ & $\frac{9}{5}$ & $\frac{1}{4}$ & $\frac{1}{2}$ \\
4 & $\frac{13}{10}$ & $\frac{7}{5}$ & $\frac{1}{4}$ & $\frac{1}{2}$ & 24 & $\frac{9}{5}$ & $\frac{37}{20}$ & $\frac{1}{4}$ & $\frac{3}{8}$ \\
5 & $\frac{7}{5}$ & $\frac{3}{2}$ & $\frac{1}{4}$ & $\frac{1}{2}$ & 25 & $\frac{37}{20}$ & $\frac{19}{10}$ & $\frac{1}{4}$ & $\frac{3}{8}$ \\
6 & $\frac{11}{10}$ & $\frac{13}{10}$ & $\frac{1}{2}$ & $\frac{3}{4}$ & 26 & $\frac{9}{5}$ & $\frac{37}{20}$ & $\frac{3}{8}$ & $\frac{1}{2}$ \\
7 & $\frac{11}{10}$ & $\frac{13}{10}$ & $\frac{3}{4}$ & $1$ & 27 & $\frac{37}{20}$ & $\frac{19}{10}$ & $\frac{3}{8}$ & $\frac{1}{2}$ \\
8 & $\frac{13}{10}$ & $\frac{7}{5}$ & $\frac{1}{2}$ & $\frac{3}{4}$ & 28 & $\frac{3}{2}$ & $\frac{8}{5}$ & $\frac{1}{2}$ & $\frac{3}{4}$ \\
9 & $\frac{7}{5}$ & $\frac{3}{2}$ & $\frac{1}{2}$ & $\frac{3}{4}$ & 29 & $\frac{8}{5}$ & $\frac{17}{10}$ & $\frac{1}{2}$ & $\frac{3}{4}$ \\
10 & $\frac{13}{10}$ & $\frac{7}{5}$ & $\frac{3}{4}$ & $1$ & 30 & $\frac{3}{2}$ & $\frac{17}{10}$ & $\frac{3}{4}$ & $1$ \\
11 & $\frac{7}{5}$ & $\frac{29}{20}$ & $\frac{3}{4}$ & $\frac{7}{8}$ & 31 & $\frac{17}{10}$ & $\frac{9}{5}$ & $\frac{1}{2}$ & $\frac{3}{4}$ \\
12 & $\frac{29}{20}$ & $\frac{3}{2}$ & $\frac{3}{4}$ & $\frac{7}{8}$ & 32 & $\frac{9}{5}$ & $\frac{37}{20}$ & $\frac{1}{2}$ & $\frac{5}{8}$ \\
13 & $\frac{7}{5}$ & $\frac{3}{2}$ & $\frac{7}{8}$ & $1$ & 33 & $\frac{37}{20}$ & $\frac{19}{10}$ & $\frac{1}{2}$ & $\frac{5}{8}$ \\
14 & $\frac{3}{2}$ & $\frac{8}{5}$ & $0$ & $\frac{1}{4}$ & 34 & $\frac{9}{5}$ & $\frac{37}{20}$ & $\frac{5}{8}$ & $\frac{3}{4}$ \\
15 & $\frac{8}{5}$ & $\frac{17}{10}$ & $0$ & $\frac{1}{4}$ & 35 & $\frac{37}{20}$ & $\frac{19}{10}$ & $\frac{5}{8}$ & $\frac{3}{4}$ \\
16 & $\frac{3}{2}$ & $\frac{8}{5}$ & $\frac{1}{4}$ & $\frac{1}{2}$ & 36 & $\frac{17}{10}$ & $\frac{9}{5}$ & $\frac{3}{4}$ & $1$ \\
17 & $\frac{8}{5}$ & $\frac{17}{10}$ & $\frac{1}{4}$ & $\frac{1}{2}$ & 37 & $\frac{9}{5}$ & $\frac{37}{20}$ & $\frac{3}{4}$ & $\frac{7}{8}$ \\
18 & $\frac{17}{10}$ & $\frac{9}{5}$ & $0$ & $\frac{1}{4}$ & 38 & $\frac{37}{20}$ & $\frac{19}{10}$ & $\frac{3}{4}$ & $\frac{7}{8}$ \\
19 & $\frac{9}{5}$ & $\frac{37}{20}$ & $0$ & $\frac{1}{8}$ & 39 & $\frac{9}{5}$ & $\frac{37}{20}$ & $\frac{7}{8}$ & $1$ \\
20 & $\frac{37}{20}$ & $\frac{19}{10}$ & $0$ & $\frac{1}{8}$ & 40 & $\frac{37}{20}$ & $\frac{19}{10}$ & $\frac{7}{8}$ & $1$ \\
\bottomrule
\end{tabular}
\end{table}

\providecommand{\bysame}{\leavevmode\hbox to3em{\hrulefill}\thinspace}
\providecommand{\MR}{\relax\ifhmode\unskip\space\fi MR }
\providecommand{\MRhref}[2]{%
  \href{http://www.ams.org/mathscinet-getitem?mr=#1}{#2}
}
\providecommand{\href}[2]{#2}


\begin{thebibliography}{9}

\bibitem{Arbon2022}
R. Arbon, \emph{Global and local bounds on the fundamental ratio of
triangles and quadrilaterals}, preprint, 2022,
\href{https://arxiv.org/abs/2207.05814}{arXiv:2207.05814}.

\bibitem{OW2009}
M. Ashbaugh, R. Benguria, R. Laugesen, and T. Weidl
(organizers), \emph{Low eigenvalues of Laplace and Schr{\"o}dinger
operators}, Oberwolfach Reports \textbf{6} (2009), no.~1, 355--428.
See pp.~405--406 for the announcement of unpublished work by B.~Siudeja.

\bibitem{EP2013}
C.~Enache and G.~A. Philippin, \emph{Some inequalities involving
eigenvalues of the Neumann Laplacian}, Math. Methods Appl. Sci. \textbf{36} (2013), no.~16, 2145--2153.

\bibitem{LS2009}
R.~S. Laugesen and B.~A. Siudeja, \emph{Maximizing
Neumann fundamental tones of triangles}, J. Math. Phys. \textbf{50} (2009), no.~11, 112903, 18~pp.

\bibitem{LS2017}
R.~S. Laugesen and B.~A. Siudeja, \emph{Triangles and
other special domains}, in \emph{Shape Optimization and Spectral
Theory}, Antoine Henrot (ed.), De Gruyter Open, Warsaw, 2017,
pp.~149--200.

\bibitem{McCartin2002}
B.~J. McCartin, \emph{Eigenstructure of the equilateral triangle,
Part II: The Neumann problem}, Math. Probl. Eng. \textbf{8} (2002),
no.~6, 517--539.

\bibitem{Siudeja2010}
B. Siudeja, \emph{Isoperimetric inequalities for eigenvalues
of triangles}, Indiana Univ. Math. J. \textbf{59}
(2010), no.~3, 1097--1120.

\bibitem{Szego1954}
G. Szeg\H{o}, \emph{Inequalities for certain eigenvalues of a
membrane of given area},J. Rational Mech. Anal.
\textbf{3} (1954), 343--356.

\bibitem{Weinberger1956}
H.~F. Weinberger, \emph{An isoperimetric inequality for the
$N$-dimensional free membrane problem}, J. Rational Mech. Anal. \textbf{5} (1956), 633--636.

\end{thebibliography}
\end{document}